\documentclass[a4paper, 12pt]{article}
\usepackage[left=3cm, right=2cm, top=2.5cm, bottom=2.5cm]{geometry}

\usepackage{amssymb,amsmath,amsthm,amsfonts,mathrsfs,bbm}
\usepackage[pdftex]{graphicx}
\usepackage{subcaption}
\usepackage{hyperref}
\usepackage{cleveref}
\usepackage{listings}
\usepackage[T1]{fontenc}
\usepackage[utf8]{inputenc}
\usepackage[english]{babel}

\usepackage{algpseudocode}

\usepackage{paralist,enumitem,tabularx}
\usepackage{mathtools}

\usepackage{layout}
\usepackage{fancyhdr}
\usepackage{setspace}

\usepackage{float}

\usepackage{pdfpages}

\usepackage{datetime}


\usepackage[backend=bibtex,maxnames=99]{biblatex} 
\addbibresource{bibliography.bib}

\numberwithin{equation}{section}

\theoremstyle{bolddef}
\newtheorem{definition}{Definition}[section]
\newtheorem{algorithm}[definition]{Algorithm}
\newtheorem{assumption}[definition]{Assumption}
\newtheorem{example}[definition]{Example}

\theoremstyle{boldplain}
\newtheorem{lemma}[definition]{Lemma}
\newtheorem{theorem}[definition]{Theorem}

\newcommand{\xk}{x^k}
\newcommand{\xkm}{x^{k-1}}
\newcommand{\xkp}{x^{k+1}}

\newcommand{\xhkp}{{\hat x}^{k+1}}
\newcommand{\pmin}{p_{\min}}
\newcommand{\gammag}{\underline{\gamma}_\rho}

\newcommand{\R}{\mathbb{R}}
\newcommand{\barR}{\overline{\mathbb{R}}}
\newcommand{\N}{\mathbb{N}}
\newcommand{\X}{\mathbb{X}}
\newcommand{\dom}{\mathrm{dom}}
\newcommand{\Rvalue}{\mathcal{R}}
\newcommand{\dist}{\mathrm{dist}}

\date{November 18, 2024}

\title{Convergence of Nonmonotone Proximal Gradient Methods under the Kurdyka–Łojasiewicz Property without a Global Lipschitz Assumption}

\author{Christian Kanzow\footnote{University of W\"urzburg, Institute
	of Mathematics, Emil-Fischer-Str.\ 30, 97074 W\"urzburg, Germany; 
	e-mail: christian.kanzow@uni-wuerzburg.de}
	\and Leo Lehmann\footnote{University of W\"urzburg, Institute
		of Mathematics, Emil-Fischer-Str.\ 30, 97074 W\"urzburg, Germany; 
		e-mail: leo.lehmann@stud-mail.uni-wuerzburg.de} }

\begin{document}

\maketitle

\begin{abstract}
We consider the composite minimization problem with the objective 
function being the sum of a continuously differentiable and a merely
lower semicontinuous and extended-valued function. The proximal 
gradient method is probably the most popular solver for this class
of problems. Its convergence theory typically requires that either
the gradient of the smooth part of the objective function is globally
Lipschitz continuous or the (implicit or explicit) a priori assumption 
that the iterates generated by this method are bounded. Some recent 
results show that, without these assumptions, the proximal gradient 
method, combined with a monotone stepsize strategy, is still globally convergent with a suitable rate-of-convergence under the 
Kurdyka–Łojasiewicz property. For a nonmonotone stepsize strategy,
there exist some attempts to verify similar convergence results, but,
so far, they need stronger assumptions. This paper is the first which
shows that nonmonotone proximal gradient methods for composite 
optimization problems share essentially the same nice global and 
rate-of-convergence properties as its monotone counterparts, still 
without assuming a global Lipschitz assumption and without an 
a priori knowledge of the boundedness of the iterates.
\end{abstract}

\noindent
\small\textbf{Keywords.}
Composite Optimization, Nonsmooth Optimization, Proximal Gradient Method,
Kurdyka–Łojasiewicz Property, Nonmonotone Line Search, Global Convergence,
Linear Convergence
\par\addvspace{\baselineskip}

\noindent
\small\textbf{AMS subject classifications.}
49J52, 90C30
\par\addvspace{\baselineskip}

\section{Introduction}\label{Sec:Intro}

We consider the composite optimization problem 
\begin{equation}\label{Eq:ComOpt}
   \min_x \psi(x) := f(x) + \phi(x), \quad x \in \mathbb{X},
\end{equation}
where $f:\X \to \R $ is continuously differentiable, 
$\phi: \X \to \barR $ 
is proper and lower semicontinuous, and $ \X $ denotes a 
Euclidean space (finite-dimensional Hilbert space). 
Note that neither $ f $ nor $ \phi $ has to be convex.
Composite optimization problems of this kind occur frequently in many
applications like machine learning, data compression, matrix completion, 
image processing, low-rank approximation, 
or dictionary learning \cite{BrucksteinDonohoElad2009,Chartrand2007,DiLorenzoLiuzziRinaldiSchoenSciandrone2012,DumitrescuIrofti2018,LiuDaiMa2015,MarjanovicSolo2012,Markovsky2018}. 

In many applications, the nonsmooth term $ \phi $ plays the role of 
a sparsity, regularization, or penalty term. For example, the standard 
choice $ \phi (x) := \lambda \| x \|_1 $ for $ x \in \R^n $ and
a constant $ \lambda > 0 $ is known to impose some sparsity in 
different applications. On the other, improved sparsity can often be obtained by using terms like $ \phi (x) := \lambda \| x \|_p $ with
$ p \in (0,1) $ or $ \phi (x) := \lambda \| x \|_0 $, with $ \| x \|_0 $
denoting the number of nonzero components of the vector $ x $. Note that
these two latter functions $ \phi $ are nonconvex, the second one is
even discontinuous, but lower semicontinuous. The same is true
if $ \phi $ represents the indicator function of a nonempty and
closed feasible set. Furthermore, we stress that the standard
global Lipschitz assumption on the derivative of $ f $ is satisfied
for quadratic objective functions, but usually violated for more
general nonlinear functions $ f $. In particular, this non-Lipschitz
behaviour occurs if $ f $ stands for an augmented Lagrangian function
or for the dual of a (not necessarily uniformly convex) primal problem,
cf.\ \cite{DeMarchiJiaKanzowMehlitz2022,XiaoxiKanzowMehlitz2023}. 
This indicates that it
is reasonable to consider composite optimization problems in this 
general setting.

The standard solver for composite optimization problems is the
proximal gradient method given by the iteration
\begin{equation}\label{Eq:k-subproblem}
	\xkp := \text{argmin}_{x \in \mathbb{X}} 
	\Big\{ f(\xk) + \big\langle f'(\xk), x-\xk \big\rangle
	+ \frac{1}{2 \gamma_k} \| x - \xk \|^2 + \phi (x) \Big\}
\end{equation}
for some given $ \gamma_k > 0 $, i.e., the new iterate $ \xkp $ is 
obtained by using a simple quadratic approximation of the nonlinear
and smooth function $ f $, whereas the nonsmooth function $ \phi $
is moved into the subproblem without any modifications. Using some simple
algebraic manipulations, it is well-known and easy to see that this
procedure can be rewritten as
\begin{equation}\label{Eq:ProxIteration}
	\xkp := \textrm{Prox}_{\gamma_k \phi} \big( x^k - \gamma_k f'(\xk)
	\big)
\end{equation}
with the prox operator defined by
\begin{equation*}
	\textrm{Prox}_{\gamma \phi}(x) := \text{argmin}_{z \in \mathbb{X}}
	\Big\{ \phi (z) + \frac{1}{2 \gamma} \| z - x \|^2 \Big\} .
\end{equation*}
Observe that \eqref{Eq:k-subproblem} reduces to the steepest descent
method $ \xkp := \xk - \gamma_k f'(\xk) $ for $ \phi \equiv 0 $,
which indicates
that the parameter $ \gamma_k > 0 $ may be viewed as a stepsize.

Note that the proximal gradient method allows an efficient implementation
only if the prox operator can be evaluated in a simple way. Fortunately,
there exist several practically relevant scenarios where this prox
operator can be computed either analytically or with a very small
computational overhead, cf.\ the excellent monograph \cite{Beck2017}
by Beck for a list of examples. The same monograph is also 
a perfect reference for the existing convergence theory of 
proximal point methods, at least for the case of convex functions
$ \phi $.

Extensions of the convergence theory to nonconvex and only lower 
semicontinuous functions $ \phi $ occur in the seminal papers
\cite{AttouchBolteSvaiter2013,BolteSabachTeboulle2014}. Both
papers cover global and rate-of-convergence results, and their technique
is based on a global Lipschitz assumption for the gradient of $ f $
and the \emph{Kurdyka–Łojasiewicz (KL) property}. While the latter
seems indispensable, the former assumption is very strong especially
for applications with non-quadratic functions $ f $. In some other
papers, this kind of global Lipschitz assumption is hidden by the a priori
condition that the iterates remain bounded or the assumption that
the level sets of $ \psi $ are bounded, which itself implies the
boundedness of the iterates.

In the recent paper \cite{KanzowMehlitz2022}, it was shown that
global convergence can also be obtained without this global 
Lipschitz assumption, only the much weaker condition of $ f' $
begin locally Lipschitz continuous is required (depending on the
smoothness properties of $ \phi $, it might also be enough to have
$ f $ only continuously differentiable). Moreover, the subsequent
work \cite{XiaoxiKanzowMehlitz2023} also shows that one can
obtain the standard global and 
rate-of-convergence results under the KL property,
again without the assumption that the derivative 
of $ f $ is globally Lipschitz
continuous. These results were obtained for a proximal gradient method
with a suitably updated stepsize parameter $ \gamma_k $ in such a way
that the sequence of function values $ \{ \psi (\xk) \} $ is 
monotonically decreasing.

The situation is more delicate when we allow a nonmonotone stepsize
rule, i.e., a choice of $ \gamma_k $ such that the sequence
$ \{ \psi (\xk) \} $ is no longer monotonically decreasing. The two
most prominent monotonicity strategies are the max-type rule by
Grippo et al.\ \cite{GrippoLamparielloLucidi1986} as well as the 
mean-type rule \cite{ZhangHager2004}
by Zhang and Hager. Both were originally introduced for smooth
unconstrained optimization problems and shown to provide global
convergence under the same set of assumptions. A closer inspection
indicates, however, that the mean-type rule requires slightly weaker
conditions than the max-type rule. This is also reflected by the 
two recent papers \cite{KanzowMehlitz2022,DeMarchi2023}. While
\cite{KanzowMehlitz2022} considers a nonmonotone proximal gradient
method with the max-type rule, the subsequent work \cite{DeMarchi2023}
investigates a nonmonotone proximal gradient method with the mean-type
rule. Both papers show that each accumulation point is a suitable
stationary point of the objective function $ \psi $, but 
\cite{KanzowMehlitz2022} requires a stronger (uniform continuity)
assumption, whereas \cite{DeMarchi2023} shows that the convergence 
theory for the monotone proximal gradient method can be adapted to the 
mean-type nonmonotone version without any extra condition.

The aim of this paper is to present a convergence and a rate-of-convergence 
result for the mean-type nonmonotone proximal gradient method under
the KL property, and without assuming a global Lipschitz assumption 
regarding $ f' $ or an a priori condition like the boundedness
of the iterates $ \xk $. To the best of our knowledge, this is the
first time that such results are shown without
any of these extra conditions. Our results are build on a combination of
ideas from the convergence theory in the monotone setting as in \cite{XiaoxiKanzowMehlitz2023},
recent contributions to mean-type nonmonotone proximal gradient methods in \cite{DeMarchi2023}
and a refinement of the (global) convergence theory in \cite{Qian-et-al-2024}
for our setting with weaker assumptions.
Unfortunately, the usual analysis
for monotone methods based on the KL property is already technical,
and the introduction of the nonmonotonicity complicates things even further.
Nevertheless, we feel it is worth going through this analysis taking
into account that nonmonotone methods often outperform their monotone
versions in practical applications, see \cite{DeMarchi2023,qian2022convergence} for numerical comparisons
in the context of proximal gradient methods. 
Taking this into account, we note
that the current paper is a purely theoretical work providing an
improved insight into the behaviour of a well-established method
for the solution of composite optimization problems.

The paper is organized in the following way. We first recall some
background material in Section~\ref{Sec:Background}. We then
state our nonmonotone proximal gradient method in 
Section~\ref{Sec:Algorithm} and recall some of its basic properties.
The corresponding convergence analysis is then presented in
Section~\ref{Sec:Convergence}. We close with some final remarks
in Section~\ref{Sec:Final}.

Notation: We write $ \langle x, y \rangle $ for the scalar product of
two elements $ x, y \in \X $, and $ \| x \| $ for the corresponding
norm of $ x \in \X $. The induced distance of a point $ x \in
\X $ to a nonempty set $ S \subseteq \X $ is denoted by
$ \dist(x, S) := \inf_{y \in  S} \| x - y \| $. The closed ball
around a given point $ x \in \X $ with radius $ r > 0 $ is
denoted by $ B_r (x) $.
Given a differentiable mapping $ f: \X \to \R $, we
write $ f'(x) $ for its derivative at $ x \in \mathbb{X} $. Finally,
$ \R $ denotes the set of real numbers, while $ \barR := (- \infty, 
+ \infty ] $ is the set of extended reals except that we exclude the
value $ - \infty $. Given an extended-valued function $ \theta: 
\X \to \barR $, we call $ \dom (\theta) := 
\{ x \in \X \mid \theta (x)  < \infty \} $
the \emph{domain} of $ \theta $. The function $ \theta $ is said to
be \emph{proper} if $ \dom (\theta) $ is nonempty. Throughout this
manuscript, we identify the dual space $ \X^* $ with $ \X $ itself.

\section{Background Material}\label{Sec:Background}

We first recall that a sequence $ \{ \xk \} \subseteq \X $
converges (locally) \emph{Q-linearly} to some limit 
$ x^* \in \X $ if there exists a constant $ c \in (0,1) $ 
such that 
\begin{equation*}
	\| \xkp - x^* \| \leq c \| \xk - x^* \|
\end{equation*}
holds for all $ k \in \N $ sufficiently large. Furthermore,
we say that $ \{ \xk \} $ converges \emph{R-linearly} to $ x^* $ if
\begin{equation*}
	\limsup_{k \to \infty} \| \xk - x^* \|^{1/k} < 1 
\end{equation*}
holds. Note that this property holds if there exist constants
$ \omega > 0 $ and $ \mu \in (0,1) $ such that $ \| \xk - x^* \| \leq 
\omega \mu^k $ for all $ k \in \N $ sufficiently large, i.e.,
if the sequence $ \| \xk - x^* \| $ is dominated by a Q-linearly
convergent null sequence.

We next recall some results from variational analysis and refer the
interested reader to the two monographs \cite{Mordukhovich2018,RockafellarWets2009}
for more details.

Given a proper, lower semicontinuous function $ \theta: \X \to
\barR $ and any $ x \in \dom (\theta) $, we call 
\begin{equation*}
	\hat\partial \theta (x) := \Big\{ v \in \X \, \Big|
	\liminf_{y \to x, y \neq x} \frac{\theta (y) - \theta (x) - 
	\langle v, y-x \rangle}{\| y- x \|} \geq 0 \Big\}
\end{equation*}
the \emph{regular or Fr\'echet subdifferential} of $ f $ at $ x $, 
whereas
\begin{equation*}
	\partial \theta (x) := \big\{ v \in \X \mid 
	\exists x^k,  v^k \in \X : \xk \to x, \theta (\xk) \to \theta (x),
	v^k \in \hat\partial \theta (\xk) \ \forall k \big\}
\end{equation*}
is called the \emph{limiting, Mordukhovich}, or 
\emph{basic subdifferential}
of $ f $ at $ x $. Recall that both sub\-differentials coincide with 
the convex subdifferential for convex functions $ \theta $, i.e., 
in this case, we have 
\begin{equation*}
	\hat\partial \theta (x) = 	\partial \theta (x) =
	\big\{ v \in \X \, \big| \, \theta (y) \geq \theta (x) + \langle 
	v, y-x \rangle \, \forall y \in \dom (\theta) \big\},
\end{equation*}
whereas for general nonconvex functions, only the inclusion
$ \hat\partial \theta (x) \subseteq \partial \theta (x) $ holds. 
These two subdifferentials have different properties, e.g., 
$ \hat\partial \theta (x) $ might be empty even for locally 
Lipschitz continuous functions, whereas $ \partial \theta (x) $ is 
known to be nonempty in this situation. Moreover, the limiting
subdifferential is \emph{robust} in the following sense: Given
a sequence $ \{ \xk \} $ converging to some limit $ x $ and a
corresponding sequence $ \{ v^k \} $ with $ v^k \in \partial \theta 
(\xk) $ for all $ k \in \N $ such that $ v^k \to v $ for some 
$ v \in \X $, then the inclusion $ v \in \partial \theta (x) $ holds.
This robustness property is highly important in the convergence theory
for many algorithms in the context of nonsmooth optimization, and simple
examples show that, in general, it is not shared by the Fr\'echet
subdifferential.

Given a lower semicontinuous function $ \theta : \X \to \barR $ and 
a local minimum $ x^* $ of $ \theta $, it follows immediately from 
the definition of the Fr\'echet subdifferential that
$ 0 \in \hat\partial \theta (x^*) $ holds. In particular, we therefore
get $ 0 \in \partial \theta (x^*) $. Any point $ x^* $ satisfying
this relation is called an \emph{M-stationary point} (M = Mordukhovich)
or simply a \emph{stationary point} of $ \theta $.

Now, coming back to our composite optimization problem where 
$ \psi = f + \phi $ is the sum of a continuously differentiable and
a lower semicontinuous function, the sum rule 
\begin{equation}\label{eq:sumrule}
	\partial \psi (x) = f'(x) + \partial \phi (x) \quad
	\forall x \in \dom (\phi)
\end{equation}
holds for the limiting subdifferential, cf.\ \cite{Mordukhovich2018}. In particular,
we therefore have a stationary point $ x^* $ of problem \eqref{Eq:ComOpt}
if and only if 
\begin{equation*}
	0 \in f'(x^*) + \partial \phi (x^*)
\end{equation*}
holds.

We finally introduce the Kurdyka–Łojasiewicz property which will play
a central role for our subsequent rate-of-convergence result. The 
following definition is a generalization of the classical 
one for nonsmooth functions, as introduced in \cite{AttouchBolteRedontSoubeyran2010,BolteDaniilidisLewis2007,BolteDaniilidisLewisShiota2007}.
Note that this KL property plays a central role in the local
convergence analysis of several algorithms for the solution of 
nonsmooth minimization problems, see \cite{attouch2009convergence,AttouchBolteSvaiter2013,BolteSabachTeboulle2014,BotCsetnek2016,BotCsetnekLaszlo2016,XiaoxiKanzowMehlitz2023,Ochs2018,OchsChenBroxPock2014} for a couple
of examples.

\begin{definition}\label{def:klproperty}
Let $g: \mathbb{X} \to \overline{\mathbb{R}}$ be lower semicontinuous. We say that $g$ satisfies the {\normalfont Kurdyka–Łojasiewicz (KL) property} at $x^* \in \{x \in \mathbb{X} \, | \, \partial g(x) \neq \emptyset\}$ if there exists a constant $\eta > 0$, a neighborhood $U \subset \mathbb{X}$ of $x^*$, and a continuous and concave function $\chi : [0, \eta] \to [0, \infty)$, called {\normalfont desingularization function}, which is continuously differentiable on $(0, \eta)$ and satisfies $\chi(0)=0$ and $\chi'(t) > 0$ for all $t \in(0, \eta)$, such that the so-called KL inequality 
\begin{equation*}
	\chi' \big( g(x) - g(x^*) \big) \dist \big( 0, \partial g(x)
	\big) \geq 1
\end{equation*}
holds for all $x \in U \cap \{x \in \mathbb{X} \, | 
\, g(x^*) < g(x) < g(x^*) + \eta\}$.
\end{definition}

\noindent
Note that there exist whole classes of functions where the KL property
is known to hold with the corresponding desingularization function
$ \chi (t) := c t^{\kappa} $ for some $ \kappa \in (0, 1] $ and a 
constant $ c > 0 $, where $ \kappa $ is called the \emph{KL exponent},
see \cite{BolteDaniilidisLewisShiota2007,Kurdyka1998}. This KL property looks somewhat artificial at a first
glance, but turns out to be a very useful and general tool for 
proving global convergence of the entire sequence as well as 
local rate-of-convergence results. Moreover, for $ g $ being a convex
function, the KL property is known to be equivalent to several
other concepts like a quadratic growth condition, a proximal error bound,
or the metric subregularity condition, see \cite{Ye_et_al_2021} 
for more details and corresponding references.

We finally restate a technical result from \cite[Lemma 1]{Aragon2018}
which will be used in order to simplify our final rate-of-convergence 
result.

\begin{lemma}\label{Lem:Aragon}
Let $ \{ s_k \} \subseteq [0, \infty )$ be any monotonically decreasing sequence satisfying $ s_k \to 0 $ and 
\begin{equation*}
	s_k^\alpha \leq \beta \big( s_k - s_{k+1} \big) \quad
	\text{for all $ k $ sufficiently large},
\end{equation*}
where $ \alpha, \beta > 0 $ are suitable constants. Then the following
statements hold:
\begin{itemize}
	\item[(a)] If $ \alpha \in (0,1] $, the sequence $ \{ s_k \} $
	converges linearly to zero with rate $ 1 - \frac{1}{\beta} $.
	\item[(b)] If $ \alpha > 1 $, there exists a constant $ \eta > 0 $
	such that 
	\begin{equation*}
		s_k \leq \eta k^{- \frac{1}{\alpha - 1}} \quad
		\text{for all $ k $ sufficiently large}.
	\end{equation*}
\end{itemize}
\end{lemma}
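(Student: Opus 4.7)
The plan is to establish each part by a direct elementary estimate on the recursion $s_k^\alpha \le \beta(s_k - s_{k+1})$, equivalent to $s_{k+1} \le s_k - s_k^\alpha/\beta$. For part~(a), I would exploit that $\alpha \le 1$ together with the fact that $s_k \to 0$ forces $s_k \le 1$ for all $k \ge k_0$, which in turn yields $s_k^\alpha \ge s_k$ on that tail. Substituting this lower bound for $s_k^\alpha$ into the recursion gives $s_{k+1} \le (1 - 1/\beta)\, s_k$, which is precisely Q-linear convergence with rate $1 - 1/\beta$. (The hypothesis implicitly forces $\beta \ge 1$, since otherwise $s_{k+1}$ would become negative; this should be noted but presents no difficulty.)

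For part~(b), the right change of variables is $u_k := s_k^{1-\alpha}/(\alpha-1)$, or more directly $s_k^{1-\alpha}$, for which I expect a uniform lower increment of order $1$. Starting from $s_{k+1} \le s_k\bigl(1 - s_k^{\alpha-1}/\beta\bigr)$ and raising to the power $1-\alpha < 0$, I would apply the Bernoulli-type inequality $(1-x)^{-(\alpha-1)} \ge 1 + (\alpha-1)x$ valid for $x \in [0,1)$, which is applicable because $s_k^{\alpha-1}/\beta < 1$ eventually thanks to $s_k \to 0$. This yields
\begin{equation*}
s_{k+1}^{1-\alpha} - s_k^{1-\alpha} \;\ge\; \frac{\alpha-1}{\beta}
\end{equation*}
for all $k$ sufficiently large. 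Telescoping from some index $k_0$ onward then gives $s_k^{1-\alpha} \ge s_{k_0}^{1-\alpha} + \frac{\alpha-1}{\beta}(k - k_0)$, and inverting this bound produces $s_k \le \eta\, k^{-1/(\alpha-1)}$ with an explicit constant $\eta$ depending only on $\alpha$, $\beta$, $k_0$.

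The only mildly delicate point is the justification of the Bernoulli-type inequality in exactly the regime needed; this is clean once one notes that $x \mapsto (1-x)^{-(\alpha-1)} - 1 - (\alpha-1)x$ is zero and has zero derivative at $x=0$ while being convex on $[0,1)$, or simply by a one-line calculus argument. Everything else is a mechanical manipulation, so I do not expect any serious obstacle; the main point of the lemma is that the scalar recursion packages the two qualitatively different rate regimes (linear versus sublinear) that arise from the KL analysis, and the two cases are handled by the two tailored monotone manipulations above.
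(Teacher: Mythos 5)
Your argument is correct. Note that the paper does not prove this lemma at all: it is quoted verbatim from the cited reference (Lemma~1 of Arag\'on Artacho et al.), so there is no in-paper proof to compare against. Your part~(a) coincides with the standard argument in that reference ($s_k\le 1$ eventually, hence $s_k\le s_k^\alpha\le\beta(s_k-s_{k+1})$, giving the contraction factor $1-1/\beta$, with the degenerate case $\beta\le 1$ forcing $s_k\equiv 0$ on the tail). Your part~(b) reaches the same key estimate $s_{k+1}^{1-\alpha}-s_k^{1-\alpha}\ge(\alpha-1)/\beta$ as the classical Attouch--Bolte-type proof, only you derive it from the convexity/Bernoulli inequality $(1-x)^{-(\alpha-1)}\ge 1+(\alpha-1)x$ applied to $s_{k+1}\le s_k(1-s_k^{\alpha-1}/\beta)$, whereas the usual route compares $s_k^{-\alpha}(s_k-s_{k+1})$ with $\int_{s_{k+1}}^{s_k}t^{-\alpha}\,dt$; both are one-line calculus facts and yield the same telescoped bound and the same $\eta$ up to constants. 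The only point worth adding explicitly is the trivial case where some $s_{k}=0$ (then all later terms vanish by monotonicity and nonnegativity, and both conclusions hold vacuously), which is needed before raising the recursion to the negative power $1-\alpha$.
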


\section{Nonmonotone Proximal Gradient Method}\label{Sec:Algorithm}

This section gives a precise presentation of our nonmonotone 
proximal gradient method and provides some of its basic properties.
We first state the assumptions that we suppose to hold throughout
our theoretical investigation of the method.

\begin{assumption}\label{genas}
Assume:
\begin{itemize}
	\item[(a)] $\psi$ is bounded from below on $\dom (\phi )$,
	\item[(b)] $\phi$ is bounded from below by an affine function,
	\item[(c)] $ f' $ is locally Lipschitz continuous.
\end{itemize}
\end{assumption}

\noindent
Note that the first condition is very reasonable since otherwise
the given composite optimization problem \eqref{Eq:ComOpt} would be
unbounded from below. The second condition essentially guarantees 
that the proximal gradient subproblems 
\eqref{Eq:k-subproblem} have a solution (not necessarily
a unique one) since this implies that eventually the quadratic
term dominates the behaviour of the corresponding function. Finally,
we stress that the
local Lipschitz condition is equivalent to $ f' $ being globally
Lipschitz continuous on \emph{compact} sets, and that this local
Lipschitz property is a much weaker condition than 
the usual global Lipschitz assumption, e.g., the exponential
function, the natural logarithm, and all polynomials of degree higher
than two are locally Lipschitz, but not globally Lipschitz on their 
respective domains.

Before we present our algorithm, recall that the basic iteration 
of the proximal gradient method is given by \eqref{Eq:k-subproblem}
or, equivalently, by \eqref{Eq:ProxIteration}.
Observe that the next iterate $ \xkp $ depends on the choice of 
the stepsize parameter $ \gamma_k $, though, for simplicity, this is
not made explicit in our notation. A central observation is that,
for sufficiently small $ \gamma_k > 0 $, the corresponding solution
$ \xkp $ satisfies the property
\begin{equation}\label{Eq:monotone-criterion}
	\psi (\xkp) \leq \psi (\xk) - \omega\frac{1}{2 \gamma_k} 
	\| \xkp - \xk \|^2
\end{equation}
for any given parameter $ \omega \in (0,1) $, provided that $ \xk $
is not already an M-stationary point of $ \psi $, see, e.g., 
\cite{DeMarchi2023,KanzowMehlitz2022} 
for a formal proof. This implies that the sequence
$ \{ \psi (x^k) \} $ is monotonically decreasing. Now, suppose that
we have a reference value $ \Rvalue_k \geq \psi (\xk) $. Then, 
of course, any $ \xkp $ satisfying the monotone criterion 
\eqref{Eq:monotone-criterion} also satisfies the inequality
\begin{equation}\label{Eq:nonmonotone-criterion}
	\psi (\xkp) \leq \Rvalue_k - \omega \frac{1}{2 \gamma_k} 
	\| \xkp - \xk \|^2,
\end{equation}
but this condition might already be satisfied for a larger choice
of $ \gamma_k $, hence, resulting in a larger step, which is the
reason why nonmonotone methods often outperform their monotone
counterparts in practical applications. 

In order to obtain
suitable (global) convergence results, the reference value $ \Rvalue_k $
has to be chosen in a careful way. One popular choice is due to
Grippo et al.\ \cite{GrippoLamparielloLucidi1986}, 
where $ \Rvalue_k := \max \{ \psi (x^l) \mid l = k, k-1, \ldots, k- l_k \} $ for some given $ l_k \in \N $. We call this
strategy the \emph{max-rule} since $ \Rvalue_k $ is defined as the maximum
function value over the last few iterates, say, the last ten points.
In our \Cref{Alg:NonmonotoneProxPoint}, however, we use
the technique introduced by Zhang and Hager \cite{ZhangHager2004}, 
where $ \Rvalue_{k+1} $
is computed as a convex combination of the previous reference value
$ \Rvalue_k $ and the new function value $ \psi (\xkp) $. We 
therefore call this the \emph{mean-rule}. The details are given in Algorithm~\ref{Alg:NonmonotoneProxPoint}. Note that, to be closer
to the original version of this nonmonotonicity criterion, we replace
the constant $ \omega \in (0,1) $ used in  
\eqref{Eq:monotone-criterion} and \eqref{Eq:nonmonotone-criterion}
by a sequence of some values $ 1-\alpha_k \in (0,1) $.

\begin{algorithm}[Nonmonotone Proximal Gradient Method]\leavevmode
	\label{Alg:NonmonotoneProxPoint}
\begin{algorithmic}[1]
	\Require $x^0 \in \dom (\phi)$, $\varepsilon > 0$, $0<\gamma_\mathrm{min} \leq \gamma_\mathrm{max} < \infty$, $0 < \alpha_\mathrm{min} \leq \alpha_\mathrm{max} < 1$, $0 < \beta_{min} \leq \beta_\mathrm{max} < 1$, $\pmin \in (0, 1]$.
	\State set $\Rvalue_0 := \psi(x_0)$.
	\For{$k = 0, 1, 2, \dots$}
	\State choose $\gamma_k \in [\gamma_\mathrm{min}, \gamma_\mathrm{max}]$.
	\State compute $\xkp \in \textrm{Prox}_{\gamma_k \phi} (x^k - \gamma_k f'(x^k))$. \label{step:xkp}
	\If{$\| \frac{1}{\gamma_k}(\xkp - x^k) - f'(\xkp) + f' (x^k) \| \leq \varepsilon$}
	\State \Return $x^* := \xkp$
	\EndIf
	\State choose $\alpha_k \in [\alpha_\mathrm{min}, \alpha_\mathrm{max}]$ and $\beta_k \in [\beta_\mathrm{min}, \beta_\mathrm{max}]$
	\If{$\psi(x^{k+1}) > \Rvalue_k - \frac{1-\alpha_k}{2\gamma_k} \|\xkp - x^k\|^2$}
	\State set $\gamma_k := \beta_k \gamma_k$ and go back to step \ref{step:xkp}.
	\EndIf
	\State choose $p_{k+1} \in [\pmin, 1]$ and set $\Rvalue_{k+1} := (1-p_{k+1})\Rvalue_k + p_{k+1} \psi(\xkp)$.
	\EndFor
\end{algorithmic}
\end{algorithm}

\noindent
We note that our convergence theory implicitly assumes
that Algorithm~\ref{Alg:NonmonotoneProxPoint} generates an infinite
sequence. In particular, we assume that the practical termination 
criterion included into line 5 of Algorithm~\ref{Alg:NonmonotoneProxPoint}
never holds. This test can be interpreted as a measure for $ \xkp $
being close to an M-stationary point, see \cite{KanzowMehlitz2022} for 
further details. Note that this also implies that $ \xkp \neq \xk $
holds for all $ k $ since otherwise the current iterate $ \xk $ is
both an M-stationary point of $ \psi $ and also a point satisfying
the termination criterion from line 5 of Algorithm~\ref{Alg:NonmonotoneProxPoint}.

The following properties can be verified for Algorithm~\ref{Alg:NonmonotoneProxPoint}, see \cite{DeMarchi2023,KanzowMehlitz2022} for
the details.

\begin{lemma}\label{Lem:KnownProperties}
Let Assumption~\ref{genas} be satisfied.
Then the following statements hold for each sequence
$ \{ \xk \} $ generated by \Cref{Alg:NonmonotoneProxPoint}:
\begin{itemize}
	\item[(a)] $ \Rvalue_k \geq \psi (x^k) $ for all $ k \in \N $.
	\item[(b)] The sequence $ \{ \Rvalue_k \} $ is monotonically
	   decreasing.
	\item[(c)] The inner loop for the stepsize calculation $ \gamma_k $
	   is finite for each $ k $ (provided that $ \xk $ is not
	   already an M-stationary point).
	\item[(d)] $ \| \xkp - \xk \| \to 0 $ for $ k \to 
	   \infty $.
\end{itemize}
\end{lemma}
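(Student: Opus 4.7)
The plan is to prove the four assertions in sequence, since (a) feeds into the other three. For (a) I would argue by induction: the base case $\Rvalue_0 = \psi(x^0)$ is immediate, and in the induction step the successful acceptance in line 9 (together with $\alpha_k < 1$ and $\gamma_k > 0$) gives $\psi(\xkp) \leq \Rvalue_k$; since $\Rvalue_{k+1}$ is a convex combination of $\Rvalue_k$ and $\psi(\xkp)$, with both quantities at least $\psi(\xkp)$ by the induction hypothesis and the acceptance test, the combination also satisfies $\Rvalue_{k+1} \geq \psi(\xkp)$. For (b), rewriting $\Rvalue_{k+1} - \Rvalue_k = p_{k+1}\bigl(\psi(\xkp) - \Rvalue_k\bigr)$ and inserting the acceptance inequality $\psi(\xkp) \leq \Rvalue_k - \frac{1-\alpha_k}{2\gamma_k}\|\xkp - \xk\|^2$ immediately yields $\Rvalue_{k+1} \leq \Rvalue_k$, together with a quantitative bound that will be reused in (d).

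For (c), I would run the classical descent-lemma argument, adapted to the local Lipschitz setting. Fix a closed ball $B_r(\xk)$ on which $f'$ is Lipschitz with constant $L = L(\xk,r)$. Comparing the value of the prox subproblem at its minimizer $\xkp$ with its value at $\xk$ gives
\begin{equation*}
   \phi(\xkp) - \phi(\xk) + \bigl\langle f'(\xk), \xkp - \xk \bigr\rangle + \frac{1}{2\gamma_k}\|\xkp - \xk\|^2 \leq 0,
\end{equation*}
which, combined with Assumption~\ref{genas}(b) (affine lower bound on $\phi$) and the Cauchy–Schwarz inequality, forces $\|\xkp - \xk\| \to 0$ as the backtracking shrinks $\gamma_k$; hence eventually $\xkp \in B_r(\xk)$. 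The standard quadratic upper bound then produces $\psi(\xkp) \leq \psi(\xk) - \frac{1-L\gamma_k}{2\gamma_k}\|\xkp - \xk\|^2$. Choosing $\gamma_k$ small enough that $L\gamma_k \leq \alpha_\mathrm{min} \leq \alpha_k$ and invoking (a) in the form $\Rvalue_k \geq \psi(\xk)$ shows the line-9 acceptance criterion is met; since each failed trial multiplies $\gamma_k$ by at most $\beta_\mathrm{max} < 1$, only finitely many reductions are required.

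For (d), I would use the quantitative refinement of (b): $\Rvalue_k - \Rvalue_{k+1} \geq \pmin\,\frac{1-\alpha_\mathrm{max}}{2\gamma_\mathrm{max}}\|\xkp - \xk\|^2$. By (a) the sequence $\{\Rvalue_k\}$ is bounded below by $\inf_{\dom(\phi)} \psi > -\infty$ from Assumption~\ref{genas}(a), and by (b) it is monotonically decreasing, so it converges. Telescoping the preceding inequality then gives $\sum_k \|\xkp - \xk\|^2 < \infty$, whence $\|\xkp - \xk\| \to 0$. The only genuinely delicate point is (c): because only local, not global, Lipschitz continuity of $f'$ is assumed, the descent-lemma bound is not available a priori, and I must first establish that the trial prox point lies inside a fixed neighborhood of $\xk$ as the backtracking proceeds; the remaining parts are essentially bookkeeping layered on top of (a) and the acceptance criterion.
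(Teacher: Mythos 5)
Your proposal is correct and follows essentially the same route as the paper: parts (a) and (b) are the standard convex-combination and acceptance-test observations, part (c) is the local descent-lemma argument (first forcing $\|\xkp-\xk\|\to 0$ along the backtracking via the affine lower bound on $\phi$ so that the trial point stays in a fixed ball, exactly the delicate point the paper delegates to \cite{DeMarchi2023,KanzowMehlitz2022}), and your telescoping argument for (d) is the same quantitative estimate \eqref{Eq:Rupdate}--\eqref{Eq:xdiff-conv} the paper uses, with summability of $\|\xkp-\xk\|^2$ in place of the sandwich theorem. No gaps.
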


\noindent
The formal proof of this result can be found in \cite{DeMarchi2023,KanzowMehlitz2022}. We note, however,
that statements (a), (b), and (c)
are relatively simple to verify or standard observations. Part (d)
is then a direct consequence of these statements, in fact, from the
computation of $ \Rvalue_k $, the acceptance criterion for the
stepsize $ \gamma_k $, and the updates of the 
corresponding parameters, we obtain 
\begin{align}
	\Rvalue_{k+1} & = ( 1 - p_{k+1}) \Rvalue_k + p_{k+1} \psi (\xkp)
	\nonumber \\
	& \leq ( 1 - p_{k+1}) \Rvalue_k + p_{k+1} \Big( \Rvalue_k -
	\frac{1 - \alpha_k}{2 \gamma_k} \| \xkp - \xk \|^2 \Big)\label{Eq:Rupdate} \\
	& \leq \Rvalue_k - \pmin \frac{1 - \alpha_{\max}}{2 \gamma_{\max}} \| \xkp - \xk \|^2. \nonumber
\end{align}
Rearranging these terms yields
\begin{equation}\label{Eq:xdiff-conv}
	\Rvalue_{k+1} - \Rvalue_k \leq - \pmin \frac{1 - \alpha_{\max}}{2 \gamma_{\max}}
	\| \xkp - \xk \|^2 \leq 0
\end{equation}
for all $ k \in \N $. Now, since $ \psi $ is bounded from below
by \Cref{genas}, we obtain from \Cref{Lem:KnownProperties} (a)
that the sequence $ \{ \Rvalue_k \} $ is also bounded from below.
In view of \Cref{Lem:KnownProperties} (b), it follows that this
sequence is convergent. Consequently, the left-hand side from
\eqref{Eq:xdiff-conv} converges to zero. Hence, statement (d)
of \Cref{Lem:KnownProperties} is a consequence of
\eqref{Eq:xdiff-conv} and the sandwich theorem.

We next summarize the main global convergence properties of 
\Cref{Alg:NonmonotoneProxPoint}, the corresponding proofs can be found in
\cite{DeMarchi2023}.

\begin{theorem}\label{Thm:GlobConv}
Let Assumption~\ref{genas} be satisfied and $ x^* $ be an accumulation
point of a sequence $ \{ \xk \} $
generated by Algorithm~\ref{Alg:NonmonotoneProxPoint}.
Then the following statements hold:
\begin{itemize}
	\item[(a)] $ x^* $ is an M-stationary point of $ \psi $.
	\item[(b)] The sequence $ \{ \psi (\xk) \} $ converges to 
	   $ \psi (x^*) $.
	\item[(c)] The sequence $ \{ \Rvalue_k \} $ converges monotonically
	   to $ \psi (x^*) $.
\end{itemize}
\end{theorem}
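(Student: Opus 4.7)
The plan is to deduce all three statements from three ingredients: convergence of the reference sequence $\{\mathcal{R}_k\}$, a subproblem comparison argument that upper-bounds $\phi(x^{k_j+1})$ by $\phi(x^*)$ along the subsequence, and a final invocation of the robustness of the limiting subdifferential.

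First, I would establish convergence of $\{\mathcal{R}_k\}$. By \Cref{Lem:KnownProperties}(a)(b) the sequence is monotonically decreasing, and since $\mathcal{R}_k \geq \psi(x^k)$ and $\psi$ is bounded below on $\dom(\phi)$ by \Cref{genas}(a), it converges to some $\mathcal{R}^* \in \mathbb{R}$. Next, rearranging the update rule gives
\begin{equation*}
    0 \leq \mathcal{R}_k - \psi(x^{k+1}) = \frac{1}{p_{k+1}}\bigl(\mathcal{R}_k - \mathcal{R}_{k+1}\bigr) \leq \frac{1}{\pmin}\bigl(\mathcal{R}_k - \mathcal{R}_{k+1}\bigr),
\end{equation*}
and the right-hand side tends to zero. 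Hence $\psi(x^k) \to \mathcal{R}^*$ for the \emph{full} sequence, not just along subsequences. This is the crucial observation that will force (b) and (c) once $\mathcal{R}^*$ is identified with $\psi(x^*)$.

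Now take a subsequence $\{x^{k_j}\}$ with $x^{k_j} \to x^*$. By \Cref{Lem:KnownProperties}(d) also $x^{k_j+1} \to x^*$, so both subsequences lie in a compact neighborhood of $x^*$ eventually. Comparing the value of the subproblem \eqref{Eq:k-subproblem} at the minimizer $x^{k_j+1}$ with its value at the feasible candidate $x^*$ gives
\begin{equation*}
    \phi(x^{k_j+1}) + \bigl\langle f'(x^{k_j}), x^{k_j+1} - x^{k_j}\bigr\rangle + \frac{1}{2\gamma_{k_j}}\|x^{k_j+1} - x^{k_j}\|^2 \leq \phi(x^*) + \bigl\langle f'(x^{k_j}), x^* - x^{k_j}\bigr\rangle + \frac{1}{2\gamma_{k_j}}\|x^* - x^{k_j}\|^2.
\end{equation*}
Using $\gamma_{k_j} \in [\gamma_{\min},\gamma_{\max}]$, continuity of $f$ and $f'$, and $x^{k_j}, x^{k_j+1} \to x^*$, the right-hand side tends to $\phi(x^*)$; taking $\limsup$ and combining with lower semicontinuity of $\phi$ yields $\phi(x^{k_j+1}) \to \phi(x^*)$ and hence $\psi(x^{k_j+1}) \to \psi(x^*)$. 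Together with the previous paragraph, this forces $\mathcal{R}^* = \psi(x^*)$, which gives statement (b) (applied to $\psi(x^k) \to \mathcal{R}^* = \psi(x^*)$) and statement (c).

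For (a), the first-order optimality condition of the subproblem \eqref{Eq:k-subproblem} together with the sum rule \eqref{eq:sumrule} shows that
\begin{equation*}
    v^{k+1} := f'(x^{k+1}) - f'(x^k) + \tfrac{1}{\gamma_k}\bigl(x^k - x^{k+1}\bigr) \in \partial\psi(x^{k+1}).
\end{equation*}
Since $\{x^{k_j}\} \cup \{x^{k_j+1}\}$ lies in a compact set on which $f'$ is Lipschitz (by \Cref{genas}(c)) with some constant $L$, we obtain $\|v^{k_j+1}\| \leq (L + 1/\gamma_{\min})\,\|x^{k_j+1} - x^{k_j}\| \to 0$ via \Cref{Lem:KnownProperties}(d). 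Since additionally $x^{k_j+1} \to x^*$ and $\psi(x^{k_j+1}) \to \psi(x^*)$, the robustness property of the limiting subdifferential yields $0 \in \partial\psi(x^*)$, i.e., M-stationarity.

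The main obstacle is the subproblem comparison step: one must ensure that $x^*$ is feasible for the comparison (which is automatic, since $x^* \in \dom\phi$ follows from lower semicontinuity together with $\phi(x^{k_j}) \leq \psi(x^{k_j}) - f(x^{k_j})$ staying bounded) and that both $x^{k_j}$ and $x^{k_j+1}$ stay in a common compact set where $f'$ is Lipschitz. The rest is bookkeeping with lower semicontinuity and the sandwich argument.
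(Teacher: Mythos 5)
Your overall architecture is sound and matches the standard route (the value-convergence bookkeeping via the update rule for $\mathcal{R}_k$, the subproblem comparison at the candidate $x^*$ to get $\phi(x^{k_j+1})\to\phi(x^*)$, and robustness of the limiting subdifferential for M-stationarity). But there is one genuine gap, and it sits exactly at the point that makes this theorem nontrivial without a global Lipschitz assumption: you assert that the accepted stepsizes satisfy $\gamma_{k_j}\in[\gamma_{\min},\gamma_{\max}]$. That is false in general. Only the \emph{initial trial} stepsize in each outer iteration is chosen from $[\gamma_{\min},\gamma_{\max}]$; the inner backtracking loop then replaces $\gamma_k$ by $\beta_k\gamma_k$ with $\beta_k<1$ as often as needed, so the accepted $\gamma_k$ can be arbitrarily small and in particular far below $\gamma_{\min}$. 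Both of the places where you use this bound then break: in the subproblem comparison, the term $\tfrac{1}{2\gamma_{k_j}}\|x^*-x^{k_j}\|^2$ on the right-hand side need not tend to zero if $\gamma_{k_j}\to 0$ (even though $\|x^*-x^{k_j}\|\to 0$), so you cannot conclude $\limsup_j\phi(x^{k_j+1})\le\phi(x^*)$; and in part (a), the estimate $\|v^{k_j+1}\|\le (L+1/\gamma_{\min})\|x^{k_j+1}-x^{k_j}\|$ fails for the same reason, so $v^{k_j+1}\to 0$ is not established. If $f'$ were globally Lipschitz one could bound the accepted stepsize below uniformly by a standard descent-lemma argument, but that is precisely the hypothesis this paper is avoiding.

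The gap is repairable: what you need is \Cref{lemma:bs}, which shows that for each $\rho>0$ there is $\underline{\gamma}_\rho>0$ with $\gamma_k\ge\underline{\gamma}_\rho$ for all $k$ with $x^k\in B_\rho(x^*)$; since $x^{k_j}\to x^*$, this applies along your subsequence and restores both estimates (with $1/\underline{\gamma}_\rho$ in place of $1/\gamma_{\min}$). The proof of that lemma uses only \Cref{Lem:KnownProperties} and the local Lipschitz continuity of $f'$, not \Cref{Thm:GlobConv}, so there is no circularity; but it is itself a nontrivial argument (a contradiction argument showing $\hat x^{k+1}-x^k\to 0$ for the rejected trial steps and then a mean-value estimate), and it cannot be replaced by the one-line appeal to the interval $[\gamma_{\min},\gamma_{\max}]$ that you make. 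Everything else in your write-up — the identity $\mathcal{R}_k-\psi(x^{k+1})=\tfrac{1}{p_{k+1}}(\mathcal{R}_k-\mathcal{R}_{k+1})$ forcing $\psi(x^k)\to\mathcal{R}^*$ along the full sequence, the lower semicontinuity sandwich, $x^*\in\dom(\phi)$, and the robustness step — is correct.
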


\noindent 
Note that the central statement of \Cref{Thm:GlobConv} is 
assertion (a), the 
corresponding technical proof in \cite{DeMarchi2023} follows the 
ideas from \cite{KanzowMehlitz2022}. The other two statements are
easier to verify. In fact, statement (c) is mainly a consequence
of the observation from \Cref{Lem:KnownProperties} (b) that the
sequence $ \{ \Rvalue_k \} $ is monotonically decreasing, and
the (usually nonmonotone) convergence of the sequence $ \{ \psi (\xk) \} $
can then be derived from this observation together with the 
update of $ \Rvalue_{k+1} $ in \Cref{Alg:NonmonotoneProxPoint}.

\section{Convergence Theory}\label{Sec:Convergence}

Theorem~\ref{Thm:GlobConv} shows very satisfactory global
(subsequential) convergence
properties under fairly mild assumptions regarding the two functions
$ f $ and $ \phi $. The aim of this section (and of this paper)
is to show that, given some accumulation point of a sequence
generated by \Cref{Alg:NonmonotoneProxPoint} such that the 
KL property holds at this point, then the entire sequence converges
to this limit point and, in addition, has 
very favourable rate-of-convergence properties.
In other words, we obtain essentially the same convergence properties for 
our nonmonotone proximal gradient method as those known for
its monotone counterpart from \cite{XiaoxiKanzowMehlitz2023}.
Once again, we stress that this result holds without
any convexity of $ f $ or $ \phi $, without a global Lipschitz 
assumption regarding $ f' $, without any explicit knowledge of
a local Lipschitz constant, without an a priori assumption that 
the sequence $ \{ \xk \} $ remains bounded, and with $ \phi $ being merely
lower semicontinuous.

The corresponding convergence theory requires some technical results
which are inspired by the corresponding ones in the papers
\cite{DeMarchi2023,XiaoxiKanzowMehlitz2023,Qian-et-al-2024} and modified in 
a suitable way to deal with the above general setting.

To this end, we begin with the following result which is the
nonmonotone counterpart of \cite[Lemma 4.1]{XiaoxiKanzowMehlitz2023}, see
also \cite[Corollary 4.5]{DeMarchi2023}.

\begin{lemma}\label{lemma:bs}
Let \Cref{genas} hold, $\{x^k\}$ be any sequence generated
by Algorithm~\ref{Alg:NonmonotoneProxPoint}, and let $x^*$ be an
accumulation point of $\{x^k\}$. Then, for each $\rho > 0$, there exists a constant $\underline{\gamma}_\rho > 0 $ such that $\gamma_k \geq \underline{\gamma}_\rho$ holds for all $k \in \mathbb{N}$
satisfying $x^k \in B_\rho(x^*)$.
\end{lemma}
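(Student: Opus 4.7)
Fix $\rho > 0$. The plan is to produce a threshold $\gamma^* > 0$, depending only on $\rho$, such that every trial stepsize $\gamma \leq \gamma^*$ is guaranteed to satisfy the acceptance criterion of line 9 of \Cref{Alg:NonmonotoneProxPoint} whenever the base point $\xk$ lies in $B_\rho(x^*)$. Once this is in hand, the inner loop forces the accepted $\gamma_k$ to satisfy $\gamma_k \geq \min\{\gamma_{\min}, \beta_{\min}\gamma^*\}$: if the very first trial is accepted then $\gamma_k \geq \gamma_{\min}$, and otherwise the immediately preceding, rejected trial stepsize must have exceeded $\gamma^*$, so after the final multiplication by $\beta_k \geq \beta_{\min}$ the accepted value is at least $\beta_{\min}\gamma^*$. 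Setting $\gammag := \min\{\gamma_{\min}, \beta_{\min}\gamma^*\}$ then yields the claim.

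To construct $\gamma^*$, I need a descent-lemma estimate that is uniform across the backtracking, so the first subtask is to bound the trial step $\|T-\xk\|$ uniformly in $\gamma \in (0,\gamma_{\max}]$, where $T \in \textrm{Prox}_{\gamma\phi}(\xk - \gamma f'(\xk))$. Testing the optimality of $T$ against the feasible point $\xk$ gives
\begin{equation*}
\tfrac{1}{2\gamma}\|T-\xk\|^2 + \langle f'(\xk), T-\xk\rangle + \phi(T) \leq \phi(\xk).
\end{equation*}
I would replace $\phi(T)$ by the affine minorant from \Cref{genas}(b), exploit the a priori upper bound $\phi(\xk) \leq \Rvalue_0 - f(\xk)$ (which follows from $\psi(\xk) \leq \Rvalue_k \leq \Rvalue_0$ via \Cref{Lem:KnownProperties}(a),(b), together with $f$ being bounded on the compact ball $B_\rho(x^*)$), and absorb the cross term by Young's inequality. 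The result is a bound $\|T-\xk\| \leq M_\rho$ depending only on $\rho$, $\gamma_{\max}$, the affine constants, and the extrema of $f$ and $\|f'\|$ on $B_\rho(x^*)$. Consequently every trial point lies in the compact set $\overline{B}_{\rho + M_\rho}(x^*)$, on which $f'$ has a finite Lipschitz constant $L_\rho$ by \Cref{genas}(c).

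Combining the standard descent lemma for $f$ on this enlarged ball with the prox inequality above yields $\psi(T) \leq \psi(\xk) + \tfrac{1}{2}\bigl(L_\rho - \tfrac{1}{\gamma}\bigr)\|T-\xk\|^2$, so using $\Rvalue_k \geq \psi(\xk)$ and $\alpha_k \geq \alpha_{\min}$ one checks that the acceptance criterion is satisfied whenever $\gamma \leq \alpha_{\min}/L_\rho =: \gamma^*$. I expect the main obstacle to be the first subtask, the uniform bound on $\|T-\xk\|$: controlling only the accepted step through $\|\xkp - \xk\| \to 0$ from \Cref{Lem:KnownProperties}(d) is not enough, since the descent lemma must cover all rejected trial points as well, whose size is a priori unknown. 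It is precisely the interplay between the affine minorant on $\phi$ and the upper bound on $\phi(\xk)$ inherited from the monotonically decreasing reference sequence $\{\Rvalue_k\}$ that makes such a uniform bound possible without a global Lipschitz or an a priori boundedness assumption.
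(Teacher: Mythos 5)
Your proof is correct, but it takes a genuinely different route from the paper's. The paper argues by contradiction: it assumes a subsequence with $x^k \in B_\rho(x^*)$ and $\gamma_k \to_K 0$, shows via the subproblem optimality, the affine minorant on $\phi$, and $\psi(\xk)\le\Rvalue_0$ that the corresponding trial displacements $\xhkp-\xk$ vanish along that subsequence (so the trial points eventually land in the fixed ball $B_{2\rho}(x^*)$), and then uses a mean-value-theorem estimate together with the violated acceptance test to force $\hat\gamma_k \geq \alpha_{\min}/(2L_{2\rho})$, a contradiction. You instead argue directly: the same three ingredients (prox optimality, affine minorant, $\Rvalue_0$-bound) give a \emph{uniform} bound $M_\rho$ on all trial displacements for base points in $B_\rho(x^*)$ and all $\gamma\le\gamma_{\max}$, after which the descent lemma on the enlarged ball $\overline{B}_{\rho+M_\rho}(x^*)$ yields an explicit acceptance threshold $\gamma^* = \alpha_{\min}/L_\rho$, and the backtracking structure converts this into $\gammag = \min\{\gamma_{\min},\beta_{\min}\gamma^*\}$. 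Your version is constructive (it exhibits $\gammag$ explicitly and avoids subsequences) at the cost of working with a Lipschitz constant on a ball whose radius depends on $M_\rho$, hence on $\Rvalue_0$ and the affine-minorant data; the paper's contradiction argument only needs asymptotic vanishing of the trial steps, so it gets away with the Lipschitz constant on $B_{2\rho}(x^*)$, but delivers a non-constructive bound. Both are complete proofs of the stated lemma; the only point worth making explicit in your write-up is the quadratic-inequality (or Young) step that turns $\tfrac{1}{2\gamma_{\max}}\|T-\xk\|^2 \le C_1\|T-\xk\| + C_2$ into $\|T-\xk\|\le M_\rho$, and the observation that the segment $[\xk,T]$ stays in the convex ball $\overline{B}_{\rho+M_\rho}(x^*)$ so that the descent lemma applies.
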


\begin{proof}
Recall that the stepsizes $\{\gamma_k\}$ are well-defined in view 
of \Cref{Lem:KnownProperties} (c). Let $\rho > 0$ be fixed. Since 
$f'$ is locally Lipschitz continuous, it is globally Lipschitz 
continuous on $B_{2 \rho}(x^*)$ with Lipschitz constant denoted by $L_{2 \rho}$. Since $x^*$ is an accumulation point, there are infinitely many iterates belonging to $B_\rho(x^*)$. 

Assume, by contradiction, that there exists a subsequence
$ \{ \xk \}_K $ with $ \xk \in B_{\rho}(x^*) $ for all $ k \in K $
and such that $ \{ \gamma_k\}_K$ is not bounded away from $0$. 
Without loss of generality, we may assume $\gamma_k \to_K 0$ and 
$x^k \to_K \bar x$ for some $\bar x \in B_{\rho}(x^*)$, and that the acceptance criterion for the
computation of the stepsize $ \gamma_k $ is violated in the previous 
iteration of the inner loop. For the corresponding trial stepsize $\hat \gamma_k := \gamma_k / \beta_k$, we then have $\gamma_k / \beta_{\max} \leq \hat \gamma_k \leq \gamma_k / \beta_{\min}$. This shows that we also
have $\hat \gamma_k \to_K 0$. 

Then the corresponding trial vector $\xhkp$, i.e., the solution of 
the subproblem
\begin{equation}\label{Eq:trial-subproblem}
	\min_x \ f(\xk) + \big\langle f'(\xk), x - \xk 
	\big\rangle + \frac{1}{2 \hat \gamma_k}
	\| x - \xk \|^2 + \phi (x), \quad x \in \X,
\end{equation}
does not satisfy the stepsize condition with associated parameter $\hat \alpha_k \in [\alpha_{\min}, \alpha_{\max}]$, i.e.,
\begin{equation}\label{eq:stepsizecond}
	\psi(\xhkp) > \Rvalue_k- \frac{1-\hat \alpha_k}{2 \hat \gamma_k} \|\xhkp - x^k\|^2 \geq \Rvalue_k - \frac{1- \alpha_{\min}}{2 \hat \gamma_k} \|\xhkp - x^k\|^2.
\end{equation}
Note that \eqref{eq:stepsizecond} in combination with 
$ \Rvalue_k \geq \psi (x^k) $, cf.\ \Cref{Lem:KnownProperties} (a),
implies, in particular, that we have $ \xk \neq \xhkp $. 
Moreover, since $\xhkp$ is a solution of \eqref{Eq:trial-subproblem}, 
we have
\begin{equation}\label{eq:subprob}
	\big\langle f'(x^k), \xhkp - x^k \big\rangle + \frac{1}{2 \hat \gamma_k} \|\hat x^{k+1} - x^{k}\|^2 + \phi(\xhkp) \leq \phi(x^{k}).
\end{equation}
Using the Cauchy-Schwarz inequality and the fact that
$\psi(\xk) \leq \Rvalue_k \leq \Rvalue_0 $ for all $k$ by
\Cref{Lem:KnownProperties} (a), (b), we obtain
\begin{align*}
	\frac{1}{2 \hat \gamma_k} \| \xhkp - \xk \|^2 &\leq \| f'(\xk)\| \|\xhkp - \xk\| + \phi(\xk) - \phi(\xhkp)\\
	&= \| f'(\xk)\| \|\xhkp - \xk\| + \psi(\xk) - f(\xk) - \phi(\xhkp)\\
	&\leq \| f'(\xk)\| \|\xhkp - \xk\| + \Rvalue_0 
	- f(\xk) - \phi(\xhkp).
\end{align*}
Using the continuous differentiability of $f$ and the boundedness 
condition from \Cref{genas} for $\phi$, we claim that the above 
inequality implies $\xhkp - \xk \to_K 0$: Assume, by contradiction, 
that $\{\| \xhkp - \xk \|\}_{k \in K}$ would be unbounded, then the left-hand side would grow more rapidly than the right-hand side. If $\{\| \xhkp - \xk \|\}_{k \in K}$ remains bounded but (at least on a subsequence) staying away from zero, then the right-hand side is bounded but the left-hand side is unbounded as we have $\hat \gamma_k \to_K 0$. Hence,
we have $\xhkp - \xk \to_K 0$.

Taking this into account and using the fact that $\bar x \in B_\rho(x^*)$,
it follows that
\begin{equation}\label{Eq:Ball-inclusions}
	\xk \in B_{\rho}(x^*) \subset B_{2 \rho}(x^*) \quad \text{and} \quad 
	\xhkp \in B_{2 \rho}(x^*) \quad \text{for sufficiently large }
	k \in K.
\end{equation}
We will exploit this observation later.

Using the differential mean-value theorem, there exists a point 
$\xi^k$ on the line segment between $\xk$ and $\xhkp$ such that
\begin{align*}
	\psi(\xhkp) - \psi(\xk) &= f(\xhkp) + \phi(\xhkp) - f(\xk) - \phi(\xk)\\
	& = \big\langle f'(\xi^k), \xhkp - \xk \big\rangle + \phi(\xhkp) - \phi(\xk).
\end{align*}
Substituting $\phi(\xhkp) - \phi(\xk)$ into \eqref{eq:subprob} yields
\begin{equation*}
   \big\langle f'(\xk) - f'(\xi^k), \xhkp - \xk \big\rangle + \frac{1}{2 \hat \gamma_k} \| \xhkp - \xk \|^2 + \psi(\xhkp) - \psi(\xk) \leq 0.
\end{equation*}
Now, we obtain
\begin{align*}
   \frac{1}{2 \hat \gamma_k} \| \xhkp - \xk \|^2 &
   \leq - \big\langle f'(\xk) - f'(\xi^k), \xhkp - \xk \big\rangle + \psi(\xk) - \psi(\xhkp)\\
	& \leq - \big\langle f'(\xk) - f'(\xi^k), \xhkp - \xk \big\rangle + \psi(\xk) - \Rvalue_{k} + \frac{1-\alpha_\text{min}}{2 \hat \gamma_k} \| \xhkp - \xk \|^2\\
	& \leq \| f'(\xk) - f'(\xi^k)\| \| \xhkp - \xk \| + \frac{1-\alpha_\text{min}}{2 \hat \gamma_k} \| \xhkp - \xk \|^2,
\end{align*}
where the second inequality is due to \eqref{eq:stepsizecond} and the 
final estimate follows from the Cauchy-Schwarz inequality together with  $\Rvalue_{k} \geq \psi(\xk)$, cf.\ \Cref{Lem:KnownProperties} (a). As $\xhkp \neq \xk$ in view of our previous discussion, the resulting
expression can be simplified to
\begin{equation*}
   \frac{\alpha_\text{min}}{2 \hat \gamma_k} \| \xhkp - \xk\| \leq \| f'(\xk) - f'(\xi^k) \|.
\end{equation*}
Since $\xi^k$ is on the line connecting $\xk$ and $\xhkp$, it follows 
from \eqref{Eq:Ball-inclusions} that also $\xi^k \in B_{2 \rho}(x^*)$ 
holds for all $k \in K$ sufficiently large. Thus, using the Lipschitz continuity of $ f' $ on $B_{2 \rho}(x^*)$, we conclude
\begin{equation*}
   \frac{\alpha_\text{min}}{2 \hat \gamma_k} \| \xhkp - \xk\| \leq L_{2\rho} \| \xk - \xi^k \| \leq L_{2 \rho} \|\xk - \xhkp\|.
\end{equation*}
As $\xhkp \neq \xk$, we get $ \hat \gamma_k \geq \frac{\alpha_\text{min}}{2 L_{2\rho}} $ for all $ k \in K $ sufficiently large.
This, in turn, implies that the corresponding subsequence $\{\gamma_k\}_K$
is also bounded from below by a positive constant, but this 
contradicts our assumption. Altogether, this completes the proof. 
\end{proof}

\noindent
For the remaining part, assume that our objective function $\psi$ satisfies the KL property at a given accumulation point $x^*$. Let $\eta > 0$ be the corresponding constant and $\chi$ the associated desingularization function from \Cref{def:klproperty}. Furthermore, we denote by $\{x^k\}_{k \in K}$ a subsequence converging to $x^*$. 

The subsequent theory requires some further constants and indices which
will be introduced here and which will be used throughout the remaining
part of this section. To this end, we first note that, in view
of \Cref{Lem:KnownProperties} (d), there exists an index $\hat k
\in \N $ such that 
\begin{equation}\label{eq:supnorm}
   \sup_{k \geq \hat k} \|x^{k+1} - x^k\| \leq \eta.
\end{equation}
Define 
\begin{equation*}
   \rho := \eta + \frac{1}{2} \quad \text{and} \quad
   C_\rho := B_\rho(x^*) \cap \big\{ x \in \X \, \big| \, 
   \psi(x) \leq \Rvalue_0 \big\}.
\end{equation*}   
Let $L_\rho$ be a global Lipschitz constant of $f'$ on $C_\rho$. By \Cref{lemma:bs}, there exists a constant $\underline{\gamma}_\rho > 0$ 
such that 
\begin{equation*}
   \gamma_k \geq \underline{\gamma}_\rho \quad \text{for all } k
   \text{ with } \xk \in C_\rho.
\end{equation*}
Following mostly \cite{Qian-et-al-2024}, we also introduce
the following notation: 
\begin{itemize}
	\item $m := \min \big\{ l \in \N \, \big| \, (1-\sqrt{1-p_{\min}}) \sqrt{l} \geq (1+\sqrt{1-p_{\min}}) \big\} $,
	\item $l(k) := k + m - 1$, 
	\item $\Xi_{k-1} := \sqrt{\Rvalue_{k-1} - \Rvalue_{k}}$ for $k \in \N $
	   and
	\item $\Delta_{i, j} := \chi \big(\Rvalue_i- \psi(x^*) \big) - 
	   \chi \big( \Rvalue_j - \psi(x^*) \big)$.
\end{itemize}
Note that the index $ m $ is obviously uniquely defined since the 
left-hand side of the inequality eventually becomes larger than
the constant on the right-hand side. Furthermore, note that
the difference $ l(k)- k = m-1 $ is a constant number for all $ k \in \N $,
in particular, this difference does not increase to infinity for
$ k \to \infty $. This simple observation plays some role in the 
subsequent convergence analysis since it guarantees that certain
sums are always taken over a finite (fixed) number of terms only. Moreover,
we note that $\Xi_{k-1} \geq 0$ holds for all $ k \in \N $ by
the monotonicity property of the sequence $ \{ \Rvalue_k \} $ from
\Cref{Lem:KnownProperties} (b). Finally, we also have
$\Delta_{i, j} \geq 0$ for all $j \geq i$ by monotonicity of $\chi$
in combination with  \Cref{Lem:KnownProperties} (b) once again.

We further introduce the two index sets
\begin{equation*}
	K_1 := \big\{ k \in \N \, \big| \, \psi(x^k) \leq \Rvalue_{k+m}
	\big\},
\end{equation*}
and
\begin{equation*}
	K_2 := \big\{k \in \N \, \big| \, \psi(x^k) > \Rvalue_{k+m} \big\}
\end{equation*}
depending on the previously introduced number $ m $.
To simplify the notation, we define the constant 
\begin{equation*}
	a:= \frac{1-\alpha_{\max}}{2 \gamma_{\max}} > 0.
\end{equation*}
Using the update rule for $\Rvalue_k$ together with the acceptance
criterion for the step size $ \gamma_k $, we obtain
\begin{equation*}
   \Rvalue_k \leq
	\Rvalue_{k-1} - \frac{1-\alpha_{k-1}}{2 \gamma_{k-1}} p_k \|\xk - \xkm\|^2 \leq \Rvalue_{k-1} - a \pmin \|\xk - \xkm\|^2,
\end{equation*}
cf.\ \eqref{Eq:Rupdate}.
Therefore,
\begin{equation}\label{eq:normxi}
   \sqrt{a \pmin} \|\xk -\xkm\| \leq \Xi_{k-1}.
\end{equation}
The following results and proofs are motivated by the corresponding
analysis in \cite{Qian-et-al-2024}. However, to avoid the a priori 
assumption that the sequence generated by \Cref{Alg:NonmonotoneProxPoint}
is bounded, we need to modify the arguments to some extend, using
ideas from \cite{XiaoxiKanzowMehlitz2023,KanzowMehlitz2022}.

\begin{lemma}\label{Lem:alpha}
Define the constant $ \hat{c} := \frac{\sqrt{\pmin}}{2\sqrt{a}} \Big(\frac{1}{\gammag} + L_\rho\Big) $. Then there
exists a sufficiently large index $k_0 \in K$ such that
\begin{equation}
	\alpha := \|x^{k_0-1} - x^*\| + \frac{4}{\sqrt{a p_{\min}}} \sum_{j = k_0}^{l(k_0)} \Xi_{j-1} + \frac{2 \hat c}{\sqrt{a p_{\min}}} \sum_{j = k_0}^{l(k_0)} \chi \big( \Rvalue_j - \psi(x^*) \big)
\end{equation}
satisfies $\alpha < \frac{1}{2}$ and $B_\alpha(x^*) \subset U$, where $U$ is the neighborhood of $x^*$ from the KL property in \Cref{def:klproperty}.
\end{lemma}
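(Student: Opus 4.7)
The plan is to show that each of the three terms defining $\alpha$ tends to $0$ as $k_0 \to \infty$ along the subsequence $K$, and then pick $k_0$ large enough to satisfy both $\alpha < 1/2$ and $B_\alpha(x^*) \subset U$. Since $U$ is an open neighborhood of $x^*$, it contains some ball $B_r(x^*)$, so the second requirement reduces to $\alpha < r$. Therefore the whole statement collapses to the claim $\alpha \to 0$ as $k_0 \to \infty$ in $K$.

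For the first summand $\|x^{k_0-1} - x^*\|$, I would use the triangle inequality
\[
   \|x^{k_0-1} - x^*\| \leq \|x^{k_0-1} - x^{k_0}\| + \|x^{k_0} - x^*\|.
\]
The first term on the right tends to $0$ by \Cref{Lem:KnownProperties}(d), and the second tends to $0$ since $\{x^k\}_{k \in K}$ is the subsequence that was chosen to converge to $x^*$.

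For the middle sum involving $\Xi_{j-1} = \sqrt{\Rvalue_{j-1} - \Rvalue_{j}}$, the key observation is that the number of summands is exactly $m$, which is a fixed constant independent of $k_0$. Now $\{\Rvalue_k\}$ is monotonically decreasing (\Cref{Lem:KnownProperties}(b)) and bounded below (because $\psi$ is bounded below on $\dom(\phi)$ by \Cref{genas}(a) and $\Rvalue_k \geq \psi(x^k)$), hence convergent, so the consecutive differences $\Rvalue_{j-1} - \Rvalue_j$ converge to $0$. Consequently each $\Xi_{j-1} \to 0$, and the finite sum of $m$ such terms tends to $0$.

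For the last sum involving $\chi(\Rvalue_j - \psi(x^*))$, I would invoke \Cref{Thm:GlobConv}(c), which gives $\Rvalue_j \downarrow \psi(x^*)$ along the full sequence. In particular, for $k_0$ large enough, $0 \leq \Rvalue_j - \psi(x^*) \leq \eta$ for every $j \in \{k_0, \ldots, l(k_0)\}$, so $\chi$ can be applied. Continuity of $\chi$ at $0$ together with $\chi(0) = 0$ yields $\chi(\Rvalue_j - \psi(x^*)) \to 0$, and again the sum of $m$ such terms vanishes. Combining the three limits, $\alpha \to 0$, and choosing $k_0 \in K$ large enough to make $\alpha < \min\{1/2, r\}$ completes the argument. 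The proof is essentially routine; there is no real obstacle once one observes that $l(k_0) - k_0 + 1 = m$ is a \emph{fixed} finite number, which ensures that the two sums behave like their individual summands in the limit.
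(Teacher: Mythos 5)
Your proof is correct and follows essentially the same route as the paper: each of the three terms is a sum of a \emph{fixed} number $m$ of summands, the first term is handled via the triangle inequality through $x^{k_0}$ using $x^k \to_K x^*$ and \Cref{Lem:KnownProperties}(d), and the other two vanish because $\Rvalue_k \downarrow \psi(x^*)$ and $\chi$ is continuous at $0$ with $\chi(0)=0$. Your explicit remark that $\Rvalue_j - \psi(x^*) \le \eta$ must hold for $\chi$ to be applicable is a small point of added care not spelled out in the paper's own proof.
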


\begin{proof}
First recall that $ l(k) - k = m - 1 $ is a fixed constant, hence,
the number of terms within each of the summations is fixed, 
independent of $ k $. Therefore, it is easy to see that each term
on the right-hand side can be made arbitrarily small: For the first term, this follows from  $x^k \to_K x^*$ together with the fact
that $ \| \xk - \xkm \| \to 0 $ in view of \Cref{Lem:KnownProperties} (d). For the second term, recall that $\Rvalue_k \to \psi(x^*)$ monotonically by \Cref{Thm:GlobConv} (c). For the final term, note again that $\Rvalue_k \to \psi(x^*)$ monotonically and that, by definition, the desingularization function $\chi$ is continuous at the origin.
\end{proof}

\noindent
Note that, in principle, \Cref{Lem:alpha} holds for an arbitrary
constant $ \hat{c} > 0 $. However, the index $ k_0 $ depends on
this constant $ \hat{c} $. Since the previous result will later
be applied to the particular choice of $ \hat{c} $ from 
\Cref{Lem:alpha} with the corresponding index $ k_0 $, the previous
result is formulated for this particular value of $ \hat{c} $.

The following result is the nonmonotone counterpart of \cite[Lemma 4.4]{XiaoxiKanzowMehlitz2023}. Regarding its assumption, we 
recall from \Cref{Thm:GlobConv} that the sequence 
$ \{ \Rvalue_k \} $ converges monotonically to the function value
$ \psi (x^*) $ at the accumulation point $ x^* $, hence, we eventually
have $ \Rvalue_k < \psi (x^*) + \eta $, where $ \eta > 0 $
denotes the constant from \Cref{def:klproperty}.

\begin{lemma}\label{distbound}
Under the conditions specified above, we have 
\begin{equation}
	\dist \big( 0, \partial \psi(\xkp) \big) 
	\leq \Big(\frac{1}{\gammag} + L_\rho \Big) \|\xkp - \xk\|
\end{equation}
for all sufficiently large $k \in \N $ such that $\xk \in B_\alpha(x^*)$ holds.
\end{lemma}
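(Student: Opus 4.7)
The plan is to produce an explicit element of $\partial \psi(\xkp)$ by reading off the first-order optimality condition of the prox subproblem, and then bound its norm using the triangle inequality together with \Cref{lemma:bs} and the local Lipschitz continuity of $f'$. Since $\xkp$ solves
\[
    \min_x \phi(x) + \frac{1}{2\gamma_k}\bigl\|x - (\xk - \gamma_k f'(\xk))\bigr\|^2,
\]
the optimality condition and the sum rule \eqref{eq:sumrule} combine to give
\[
    v_k := f'(\xkp) - f'(\xk) - \tfrac{1}{\gamma_k}(\xkp - \xk) \in \partial \psi(\xkp),
\]
so the triangle inequality bounds $\dist(0,\partial\psi(\xkp))$ by $\|f'(\xkp)-f'(\xk)\| + \tfrac{1}{\gamma_k}\|\xkp - \xk\|$.

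To finish, I would apply the two preparatory facts. The hypothesis $\xk \in B_\alpha(x^*)$ together with $\alpha < 1/2 < \rho$ places $\xk$ in $B_\rho(x^*)$; combined with $\psi(\xk) \leq \Rvalue_k \leq \Rvalue_0$ from \Cref{Lem:KnownProperties}(a)(b), this shows $\xk \in C_\rho$, so \Cref{lemma:bs} yields $\gamma_k \geq \gammag$, controlling the second summand. For the first summand I would use the Lipschitz constant $L_\rho$ of $f'$ on $C_\rho$, provided $\xkp$ also lies in $C_\rho$.

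The one non-routine step is verifying $\xkp \in C_\rho$ for all sufficiently large $k$ in the regime of the lemma. The function-value part is immediate since $\psi(\xkp) \leq \Rvalue_{k+1} \leq \Rvalue_0$ by \Cref{Lem:KnownProperties}(a)(b). For the norm part, I would estimate
\[
    \|\xkp - x^*\| \leq \|\xkp - \xk\| + \|\xk - x^*\| \leq \|\xkp - \xk\| + \alpha,
\]
and invoke \Cref{Lem:KnownProperties}(d) to make $\|\xkp - \xk\| \leq \rho - \alpha$ (which is positive since $\rho = \eta + \tfrac{1}{2}$ and $\alpha < \tfrac{1}{2}$) for all $k$ sufficiently large. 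This yields $\xkp \in B_\rho(x^*) \cap \{\psi \leq \Rvalue_0\} = C_\rho$.

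Combining these three pieces gives
\[
    \dist\bigl(0,\partial\psi(\xkp)\bigr) \leq \|v_k\| \leq L_\rho\|\xkp-\xk\| + \tfrac{1}{\gammag}\|\xkp - \xk\| = \Bigl(\tfrac{1}{\gammag} + L_\rho\Bigr)\|\xkp-\xk\|,
\]
which is the claim. The main obstacle, as anticipated, is the simultaneous membership of both $\xk$ and $\xkp$ in $C_\rho$; without the a priori boundedness assumption on the iterates, this is exactly the place where the combined use of $\alpha < 1/2$, the definition $\rho = \eta + 1/2$, and the vanishing step sizes from \Cref{Lem:KnownProperties}(d) becomes essential.
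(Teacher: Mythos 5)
Your proposal is correct and follows essentially the same route as the paper: extract the subgradient $\tfrac{1}{\gamma_k}(\xk-\xkp)+f'(\xkp)-f'(\xk)\in\partial\psi(\xkp)$ from the prox optimality condition, bound $\gamma_k$ from below via \Cref{lemma:bs}, and place both $\xk$ and $\xkp$ in $C_\rho$ (using $\alpha<1/2$, $\rho=\eta+1/2$, and the vanishing step lengths) so that $L_\rho$ applies. The paper phrases the last step via the index $\hat k$ from \eqref{eq:supnorm} giving $\|\xkp-x^*\|\leq\eta+\alpha\leq\rho$, which is the same estimate you carry out.
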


\begin{proof}
For all $k$, since $\xkp$ solves the proximal gradient subproblem
\eqref{Eq:k-subproblem}, we obtain
\begin{equation}
   0 \in f'(\xk) + \frac{1}{\gamma_{k}} (\xkp - \xk) + \partial \phi(\xkp)
\end{equation}
from the M-stationary condition together with the sum
rule \eqref{eq:sumrule} for the Mordukhovich subdifferential. This implies
\begin{equation}\label{eq:mstatimp}
   \frac{1}{\gamma_{k}} ( \xk - \xkp ) + f'(\xkp) - f'(\xk) \in
   f'(\xkp) + \partial \phi(\xkp) = \partial \psi(\xkp)
\end{equation}
using once again the sum rule from \eqref{eq:sumrule}. Take $k \in \mathbb{N}$ sufficiently large such that $\xk \in B_\alpha(x^*)$ and $k \geq \hat k$, where $\hat k$ is the index from \eqref{eq:supnorm}
and $ \alpha $ is the constant defined in \Cref{Lem:alpha}.
Since $\alpha \leq \rho$, we have $\gamma_k \geq \gammag$ by
\Cref{lemma:bs}. Now, the estimate
\begin{equation*}
   \| \xkp - x^* \| \leq \| \xkp - \xk \| + \| \xk - x^* \| \leq \eta + \alpha \leq \rho
\end{equation*}
shows that $\xk, \xkp \in C_\rho$. Therefore, we get
\begin{equation*}
   \| f'(\xkp) - f'(\xk) \| \leq L_\rho \| \xkp - \xk \|.
\end{equation*}
Using the bound on $\gamma_k$ and \eqref{eq:mstatimp} gives
\begin{align*}
   \dist \big( 0, \partial \psi(\xkp) \big) & \leq \Big\| \frac{1}{\gamma_k} (\xk - \xkp) + f'(\xkp) - f'(\xk) 
   \Big\| \\
	& \leq \frac{1}{\gamma_k} \|\xk - \xkp \| + L_\rho \| \xkp - \xk \| \\
	& \leq \Big(\frac{1}{\gammag} + L_\rho\Big) \| \xkp - \xk \|
\end{align*}
for all $k$ with $k \geq \hat k$ and $\xk \in B_\alpha(x^*)$.
\end{proof}

\noindent
We now present our final technical result, motivated by \cite{Qian-et-al-2024}.

\begin{lemma}\label{lemma:lemmaxi}
For all sufficiently large $k \in \mathbb{N}$ with $\Rvalue_k < \psi(x^*) + \eta$ and $\xkm \in B_\alpha(x^*)$, the following inequality holds:
\begin{equation}\label{eq:lemmaxi}
   \frac{1-\sqrt{1-p_{\min}}}{\sqrt{m}} \sum_{i = k}^{l(k)} \Xi_i \leq \left(1/2 + \sqrt{1-p_{\min}}\right) \Xi_{k-1} + \hat c \Delta_{k, k+m},
\end{equation}
where $\hat c $ denotes the constant from \Cref{Lem:alpha}.
\end{lemma}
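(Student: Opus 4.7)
The plan is to establish \eqref{eq:lemmaxi} by combining, for each $i$ in the window $\{k, \ldots, l(k)\}$, the subdifferential estimate of \Cref{distbound} with the KL inequality applied at $x^{i+1}$, and then converting the resulting per-step bounds into a partially telescoping estimate involving $\chi$-values along $\{\Rvalue_j\}$. The main technical work is to reconcile the fact that KL is expressed through $\chi'(\psi(x^{i+1}) - \psi(x^*))$, while the descent quantities $\Xi_i^2 = \Rvalue_i - \Rvalue_{i+1}$ are naturally phrased in terms of the reference values.

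First, for each such $i$ with $x^i \in B_\alpha(x^*)$, \Cref{distbound} combined with \eqref{eq:normxi} yields $\dist(0, \partial \psi(x^{i+1})) \leq \tfrac{2\hat c}{\pmin}\, \Xi_i$. By the triangle inequality, the choice of $\alpha$, and \eqref{eq:supnorm}, the point $x^{i+1}$ lies in the KL neighborhood $U$ and, for sufficiently large $k$, satisfies $\psi(x^*) < \psi(x^{i+1}) \leq \Rvalue_i < \psi(x^*)+\eta$ (the degenerate case $\psi(x^{i+1}) = \psi(x^*)$ forces $\Xi_j \equiv 0$ for all subsequent $j$ via the monotonicity of $\{\Rvalue_k\}$ and can be disposed of separately). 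Hence the KL inequality delivers the key pointwise bound
\[
\chi'\big(\psi(x^{i+1}) - \psi(x^*)\big) \cdot \Xi_i \;\geq\; \tfrac{\pmin}{2\hat c}.
\]
Applying the concavity of $\chi$ at $\psi(x^{i+1}) - \psi(x^*)$, together with the identity $\Rvalue_i - \psi(x^{i+1}) = \Xi_i^2 / p_{i+1}$ coming from the mean-rule update, and inserting the lower bound on $\chi'$ above, I obtain a per-step estimate of the form $\Xi_i \leq c_0\big[\chi(\Rvalue_i - \psi(x^*)) - \chi(\psi(x^{i+1}) - \psi(x^*))\big]$ with an explicit constant $c_0$ depending on $\hat c$ and $\pmin$.

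The obstruction to direct telescoping is that $\chi(\psi(x^{i+1}) - \psi(x^*))$ differs from $\chi(\Rvalue_{i+1} - \psi(x^*))$. The gap is controlled via the identity $\Rvalue_{i+1} - \psi(x^{i+1}) = \tfrac{1-p_{i+1}}{p_{i+1}} \Xi_i^2$, whose square root is bounded by $\sqrt{(1-\pmin)/\pmin}\,\Xi_i$; a further concavity step converts this into a leakage of order $\sqrt{1-\pmin}\,\Xi_i$ between the two $\chi$-values. Summing over $i \in \{k, \ldots, l(k)\}$ produces on the right-hand side the telescoping term $\hat c\,\Delta_{k,k+m}$ together with a total leakage $\sqrt{1-\pmin}\sum_{i=k}^{l(k)} \Xi_i$ and a boundary contribution proportional to $\Xi_{k-1}$. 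The definition of $m$ is engineered exactly for this balance: after transferring the leakage to the left and invoking Cauchy--Schwarz to convert the sum of $m$ consecutive $\Xi_i$ into a $\sqrt{m}$-factor, the coefficient $(1-\sqrt{1-\pmin})/\sqrt{m}$ on the left becomes admissible as soon as $(1-\sqrt{1-\pmin})\sqrt{m} \geq 1+\sqrt{1-\pmin}$, producing exactly the form of \eqref{eq:lemmaxi}. The hardest part will be the careful bookkeeping of these $\sqrt{1-\pmin}$-corrections through the summation, ensuring that the stated coefficient $\hat c$ on the $\Delta_{k,k+m}$-term survives the estimation and that the boundary terms consolidate neatly into $(1/2 + \sqrt{1-\pmin})\Xi_{k-1}$.
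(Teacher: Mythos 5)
Your plan diverges substantially from the paper's proof, and it contains a gap that I do not see how to repair in the form you describe. The paper applies the KL inequality exactly once, at the single point $x^k$, and only in the case $\psi(x^k) > \Rvalue_{k+m}$; in the complementary case $\psi(x^k) \leq \Rvalue_{k+m}$ the bound follows from a purely algebraic manipulation of the mean-rule update, with no KL input at all. This case distinction is not cosmetic: it is what makes the concavity of $\chi$ usable. For concave $\chi$ and $s \leq t$ one has $\chi(t)-\chi(s) \geq \chi'(t)(t-s)$, i.e.\ a \emph{lower} bound on a $\chi$-difference requires the derivative at the \emph{larger} argument. In the paper's Case~2 the KL point satisfies $\Rvalue_{k+m} < \psi(x^k) \leq \Rvalue_k$, so $\psi(x^k)-\psi(x^*)$ is the larger argument when compared with $\Rvalue_{k+m}-\psi(x^*)$, and the KL lower bound on $\chi'(\psi(x^k)-\psi(x^*))$ plugs in correctly. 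In your per-step estimate you need a lower bound on $\chi(\Rvalue_i-\psi(x^*)) - \chi(\psi(x^{i+1})-\psi(x^*))$, but here $\psi(x^{i+1})-\psi(x^*)$ is the \emph{smaller} argument (since $\psi(x^{i+1}) \leq \Rvalue_i$), and concavity at that point only gives the \emph{upper} bound $\chi'(\psi(x^{i+1})-\psi(x^*))\,(\Rvalue_i-\psi(x^{i+1}))$ for that difference. Inserting a lower bound on $\chi'$ into an upper bound yields nothing, so your claimed estimate $\Xi_i \leq c_0\big[\chi(\Rvalue_i-\psi(x^*)) - \chi(\psi(x^{i+1})-\psi(x^*))\big]$ does not follow; this is precisely the obstruction that the $K_1$/$K_2$ split is designed to avoid.

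Two further problems. First, to invoke \Cref{distbound} at $x^{i+1}$ for every $i \in \{k,\dots,l(k)\}$ you need $x^i \in B_\alpha(x^*)$ throughout the window, whereas the lemma's hypothesis only grants $x^{k-1} \in B_\alpha(x^*)$; since the lemma is consumed inside the induction of \Cref{thm:convergence}, where membership of the later iterates in $B_\alpha(x^*)$ is exactly what is being established, strengthening the hypothesis in this way would make that induction circular. Second, your disposal of the degenerate case is incorrect: $\psi(x^{i+1}) = \psi(x^*)$ gives $\Rvalue_{i+1} = (1-p_{i+1})\Rvalue_i + p_{i+1}\psi(x^*)$, which still exceeds $\psi(x^*)$ whenever $p_{i+1}<1$ and $\Rvalue_i > \psi(x^*)$, so the subsequent $\Xi_j$ need not vanish. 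I recommend restructuring the argument around the single quantity $\Rvalue_k - \Rvalue_{k+m}$, which dominates the left-hand side of \eqref{eq:lemmaxi} after one application of Jensen's inequality, and then splitting according to whether $\psi(x^k) \leq \Rvalue_{k+m}$ or not, as the paper does.
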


\begin{proof}
First note that $x \mapsto \sqrt{x}$ is a concave function, thus the application of Jensen's inequality yields
\begin{equation}\label{eq:inequality}
   \frac{1-\sqrt{1-p_{\min}}}{\sqrt{m}} \sum_{i = k}^{l(k)} \Xi_i \leq \big( 1-\sqrt{1-\pmin} \big) \sqrt{\Rvalue_k - \Rvalue_{k+m}}.
\end{equation}
We now distinguish two cases.\smallskip

\noindent
\textbf{Case 1:} $k \in K_1$. We then have $ \psi (\xk) \leq 
\Rvalue_{k+m} $, which implies
\begin{align*}
   \Rvalue_k - \Rvalue_{k+m} &= (1-p_k) \Rvalue_{k-1} + p_k \psi(\xk) - \Rvalue_{k+m}\\
   & \leq (1-p_k) \Rvalue_{k-1} + p_k \Rvalue_{k+m} - \Rvalue_{k+m}\\
   & = (1-p_k) (\Rvalue_{k-1} - \Rvalue_{k+m})\\
	&\leq (1-\pmin) (\Rvalue_{k-1} - \Rvalue_{k+m})
	\qquad (\text{recall that } \Rvalue_{k-1} \geq \Rvalue_{k+m})\\
	&= (1-\pmin) (\Rvalue_{k-1} - \Rvalue_k + \Rvalue_k - \Rvalue_{k+m}).
\end{align*}
Using $\sqrt{x+y} \leq \sqrt{x} + \sqrt{y}$ for all $x, y \in \mathbb{R}_{\geq 0}$, we obtain
\begin{equation*}
   \big( 1- \sqrt{1- \pmin} \big) \sqrt{\Rvalue_k - \Rvalue_{k+m}} \leq \sqrt{1-\pmin} \Xi_{k-1}.
\end{equation*}
The statement therefore follows from \eqref{eq:inequality}. \smallskip

\noindent
\textbf{Case 2:} $k \in K_2$. We then have $\psi(x^*) \leq 
\Rvalue_{k+m} < \psi(\xk) \leq \Rvalue_{k} < \psi(x^*) + \eta$
by assumption. Using the KL property of $\chi$, we get
\begin{equation*}
   \chi' \big( \psi(\xk) - \psi(x^*) \big) 
   \dist \big( 0, \partial \psi(\xk) \big) \geq 1.
\end{equation*}
As $\xkm$ was assumed to be in $B_\alpha(x^*)$, by application of \Cref{distbound}, we have
\begin{equation}\label{eq:chi-star}
   \chi' \big( \psi(\xk) - \psi(x^*) \big) \geq \frac{1}{\Big(\frac{1}{\gammag} + L_\rho\Big) \|\xk - \xkm\|}
\end{equation}
(recall that $ \xk \neq x^{k-1} $ since \Cref{Alg:NonmonotoneProxPoint}
is assumed to generate an infinite sequence).
Using the properties of $\chi$, we now obtain
\begin{align*}
   \Delta_{k, k+m} &= \chi \big(\Rvalue_k- \psi(x^*) \big) -
    \chi \big( \Rvalue_{k+m} - \psi(x^*) \big) \\
	&\geq \chi \big( \psi(x^k)- \psi(x^*) \big) - 
	\chi \big( \Rvalue_{k+m} - \psi(x^*) \big)\\
	&\geq \chi' \big( \psi(\xk) - \psi(x^*) \big) 
	\big( \psi(\xk) - \Rvalue_{k+m} \big)\\
	&\geq \frac{\psi(\xk) - \Rvalue_{k+m}}{\Big(\frac{1}{\gammag} + L_\rho\Big) \|\xk - \xkm\|},
\end{align*}
where the first inequality results from the monotonicity of 
$ \chi $, the next one exploits the concavity of $ \chi $, and 
the final estimate exploits \eqref{eq:chi-star} together with the
fact that $ \psi (\xk) - \Rvalue_{k+m} > 0 $ in the case under
consideration. Thus, with \eqref{eq:normxi}, we get
\begin{equation*}
   \psi(\xk) - \Rvalue_{k+m} \leq \frac{2\hat c}{\pmin} \Xi_{k-1} \Delta_{k, k+m}
\end{equation*}
from the definition of $ \hat{c} $.
Similar to the first case, our aim is to bound 
the difference $\Rvalue_k - \Rvalue_{k+m}$. Using the fact that $\psi(\xk) \leq \Rvalue_{k-1}$ by the acceptance criterion for our 
stepsize computation as well as $p_k \geq \pmin$, we have
\begin{equation*}
	p_k \psi (\xk) + ( 1- p_k)\Rvalue_{k-1} \leq 
    p_{\min} \psi (\xk) + ( 1 - p_{\min}) \Rvalue_{k-1} .
\end{equation*}
Together with the definition of $ \Rvalue_k := (1-p_k)\Rvalue_{k-1} + p_k \psi(\xk) $, this yields
\begin{align*}
   \Rvalue_k - \Rvalue_{k+m} &= p_k \psi(\xk) + (1-p_k) \Rvalue_{k-1} - \Rvalue_{k+m} \\
	& \leq \pmin \psi(\xk) + (1-\pmin) \Rvalue´_{k-1} - \pmin \Rvalue_{k+m} - (1-\pmin) \Rvalue_{k+m}\\
	& = \pmin \big( \psi(\xk) - \Rvalue_{k+m} \big) + (1-\pmin) (\Rvalue_{k-1} - \Rvalue_{k+m})\\
	& \leq 2\hat c \Xi_{k-1} \Delta_{k, k+m}  + (1-\pmin) (\Rvalue_{k-1} - \Rvalue_{k+m})\\
	& = 2\hat c \Xi_{k-1} \Delta_{k, k+m}  + (1-\pmin) (\Rvalue_{k-1} - \Rvalue_k + \Rvalue_k - \Rvalue_{k+m}).
\end{align*}
Taking square roots on both sides and using
$\sqrt{x+y} \leq \sqrt{x} + \sqrt{y}$ for all $ x, y \in \mathbb{R}_{\geq 0}$, we obtain
\begin{equation*}
	\sqrt{\Rvalue_k - \Rvalue_{k+mn}}
	\leq \sqrt{2 \hat{c} \Xi_{k-1} \Delta_{k,k+m}} +
	\sqrt{1 - \pmin} \big( \sqrt{\Rvalue_{k-1} - \Rvalue_k} 
	+ \sqrt{\Rvalue_k - \Rvalue_{k+m}} \big),
\end{equation*}
so we have
\begin{equation*}
   \big( 1 - \sqrt{1 - \pmin} \big) \sqrt{\Rvalue_k - \Rvalue_{k+m}}
   \leq \sqrt{2 \hat{c} \Xi_{k-1} \Delta_{k,k+m}} +
   \sqrt{1 - \pmin} \Xi_{k-1}.
\end{equation*}
Exploiting the inequality $2\sqrt{xy} \leq x+y$ for all $x, y \in \mathbb{R}_{\geq 0}$, this yields
\begin{equation*}
   \big( 1-\sqrt{1-\pmin} \big) 
   \sqrt{\Rvalue_k - \Rvalue_{k+m}} \leq 
   \Big(\frac{1}{2} + \sqrt{1-\pmin} \Big) \Xi_{k-1} + \hat c \Delta_{k, k+m}.
\end{equation*}
In view of \eqref{eq:inequality}, this completes the proof.
\end{proof}

\noindent
The following result shows global convergence of the entire sequence $\{ \xk \}$ generated by the nonmonotone proximal gradient method to one of its accumulation points $x^*$, given that $\psi$ satisfies the KL property at this point. The proof follows the technique of the global convergence result in \cite{Qian-et-al-2024}. However, by using our previous results, we neither assume the a priori boundedness of the iterates $\{\xk\}$ 
nor do we require $f$ to satisfy a global Lipschitz condition.

\begin{theorem}\label{thm:convergence}
Let \Cref{genas} hold, let $\{\xk\}_K$ be a subsequence converging to some limit point $x^*$, and suppose that the KL property for $\psi$ holds at $x^*$. Then the entire sequence $\{\xk\}$ converges to $x^*$.
\end{theorem}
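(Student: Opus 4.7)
The plan is to run the standard KL-based global convergence argument, but carried out in blocks of length $m$ to accommodate the nonmonotonicity introduced by $\mathcal{R}_k$. The goal is to show that $\sum_{k \geq k_0} \|x^{k+1}-x^k\| < \infty$, which makes $\{x^k\}$ Cauchy, and since $x^*$ is already an accumulation point, the whole sequence must converge to $x^*$. All the tools are in place: \Cref{lemma:bs} (uniform lower bound on $\gamma_k$ inside the ball), \Cref{distbound} (bound on $\mathrm{dist}(0,\partial\psi(x^{k+1}))$ in terms of step norms) and the summation inequality \eqref{eq:lemmaxi} from \Cref{lemma:lemmaxi} that aggregates $m$ consecutive $\Xi_i$'s and converts the KL inequality into a telescoping estimate for $\chi(\mathcal{R}_k - \psi(x^*))$.

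First I would fix the index $k_0 \in K$ from \Cref{Lem:alpha}. By enlarging $k_0$ along the subsequence $K$ if necessary, I would simultaneously ensure that $k_0 \geq \hat{k}$ (so \eqref{eq:supnorm} applies from then on), that $\mathcal{R}_k < \psi(x^*)+\eta$ for all $k \geq k_0-1$ (using monotone convergence $\mathcal{R}_k \to \psi(x^*)$ from \Cref{Thm:GlobConv}(c)), and that $x^{k_0-1} \in B_\alpha(x^*)$. Here $\alpha$ is the constant defined in \Cref{Lem:alpha} with the specific $\hat c = \frac{\sqrt{\pmin}}{2\sqrt{a}}(1/\gammag + L_\rho)$ coming from \Cref{distbound}.

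The heart of the proof is an induction showing that $x^k \in B_\alpha(x^*) \subset B_\rho(x^*)$ for every $k \geq k_0 - 1$, together with the summability estimate
\begin{equation*}
\sum_{i=k_0}^{N} \|x^{i}-x^{i-1}\| \leq \|x^{k_0-1}-x^*\| \;+\; C_1 \sum_{j=k_0}^{l(k_0)} \Xi_{j-1} \;+\; C_2\sum_{j=k_0}^{l(k_0)} \chi(\mathcal{R}_j - \psi(x^*))
\end{equation*}
for suitable constants $C_1, C_2$ matching those in $\alpha$. In the inductive step I would sum \eqref{eq:lemmaxi} over $k = k_0, k_0+1, \ldots, N$: on the left-hand side the sums $\sum_{i=k}^{l(k)} \Xi_i$ overlap to give, up to a fixed number of boundary terms indexed by the block length $m$, a constant multiple of $\sum_{i=k_0}^{N+m-1}\Xi_i$; the $\Xi_{k-1}$ terms on the right produce a similar partially overlapping sum; and the $\Delta_{k,k+m}$ terms telescope to $\chi(\mathcal R_{k_0}-\psi(x^*)) + \cdots - \chi(\mathcal R_{N+m}-\psi(x^*))$, which stays bounded since $\chi \geq 0$. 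The constant $m$ in \Cref{Lem:alpha} was chosen exactly so that $(1-\sqrt{1-\pmin})\sqrt{m} \geq 1 + \sqrt{1-\pmin}$, which guarantees that the aggregated coefficient on the left strictly dominates the $\Xi_{k-1}$ coefficient on the right, so we can absorb and obtain an $O(1)$ bound on $\sum \Xi_i$, independent of $N$. Translating via \eqref{eq:normxi} yields the claimed summability of step norms, and then the triangle inequality $\|x^{N+1}-x^*\| \leq \|x^{k_0-1}-x^*\| + \sum_{i=k_0}^{N+1}\|x^i - x^{i-1}\|$ together with the definition of $\alpha$ gives $x^{N+1} \in B_\alpha(x^*)$, closing the induction. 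A prerequisite for invoking \eqref{eq:lemmaxi} at each step is $\mathcal R_k < \psi(x^*)+\eta$, which is already arranged by the choice of $k_0$.

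The main technical obstacle I expect is the bookkeeping of the telescoping: \eqref{eq:lemmaxi} mixes block sums of $\Xi_i$ on the left with a single $\Xi_{k-1}$ plus $\Delta_{k,k+m}$ on the right, and one must keep track of both the overlapping boundary terms (only $m-1$ of them on each side, hence absorbable into the constants from \Cref{Lem:alpha}) and the index shift between $\mathcal R_k$ and $x^{k+1}$. Once the summability $\sum \|x^{k+1}-x^k\|<\infty$ is established, concluding that $\{x^k\}$ is Cauchy and hence converges to $x^*$ (the unique accumulation point in $\overline{B_\alpha(x^*)}$) is immediate.
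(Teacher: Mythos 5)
Your proposal follows essentially the same route as the paper's proof: a joint induction establishing $x^k \in B_\alpha(x^*)$ for all $k \geq k_0-1$ alongside the aggregated bound obtained by summing \eqref{eq:lemmaxi}, with the overlap of the block sums, the telescoping of the $\Delta_{j,j+m}$ terms, the absorption enabled by the defining inequality for $m$, and the translation to step norms via \eqref{eq:normxi} all matching the paper's argument. The plan is correct; the only (inessential) slip is attributing the definition of $m$ to \Cref{Lem:alpha} rather than to the notation introduced after \Cref{lemma:bs}.
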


\begin{proof}
Let $k_0$ be the index from the definition of $\alpha$,
cf.\ \Cref{Lem:alpha}.
Without loss of generality, we may assume $k_0 \geq \hat k$, where $\hat k$ is the index from \eqref{eq:supnorm} and that $\Rvalue_{k_0} < \psi(x^*) + \eta$. We now claim that the following statements hold:
\begin{enumerate}[label=(\alph*)]
	\item for all $k \geq k_0-1$: $\xk \in B_\alpha(x^*)$, and
	\item for all $k \geq l(k_0)$:
	\begin{equation}\label{eq:statementb}
	\big( 1-\sqrt{1-\pmin} \big) 
	\sqrt{m} \sum_{j = l(k_0)}^k \Xi_j \leq \Big( \frac{1}{2} + \sqrt{1-\pmin} \Big) \sum_{j = k_0}^k \Xi_{j-1} + \hat c \sum_{j = k_0}^{l(k_0)} \chi \big( \Rvalue_j - \psi(x^*) \big),
	\end{equation}
\end{enumerate}
where $ \hat{c} $ denotes the constant from \Cref{Lem:alpha}.
We verify these two statements jointly by induction over $k$. For all $k \in \{k_0-1, \ldots, l(k_0)\}$, we obtain from \eqref{eq:normxi} 
and the definition of the constant $ \alpha $ in \Cref{Lem:alpha} that
\begin{align*}
   \| x^{k} - x^*\| & \leq \| x^{k_0-1} - x^* \| + 
   \sum_{j = k_0}^{k} \| x^j - x^{j-1} \| \\
   & \leq \| x^{k_0-1} - x^* \| + \sum_{j = k_0}^{l(k_0)} \| x^j - x^{j-1} \| \\
	& \leq \| x^{k_0-1} - x^* \| + \frac{1}{\sqrt{a \pmin}} \sum_{j = k_0}^{l(k_0)} \Xi_{j-1} \leq \alpha,
\end{align*}
which shows the first statement for $k = k_0-1, \dots, l(k_0)$. Now, we can apply \Cref{lemma:lemmaxi} for indices $k = k_0, \dots, l(k_0)$ and obtain
\begin{align*}
   \big( 1-\sqrt{1-\pmin} \big)
   \sqrt{m} \Xi_{l(k_0)}& \leq \frac{1 - \sqrt{1- \pmin}}{\sqrt{m}} \sum_{j=k_0}^{l(k_0)} \sum_{i = j}^{l(j)} \Xi_i \\
	&\leq \left(1/2 + \sqrt{1-p_{\min}}\right) \sum_{j=k_0}^{l(k_0)} \Xi_{j-1} + \hat c \sum_{j = k_0}^{l(k_0)} \Delta_{j, j+m}\\
	&\leq \left(1/2 + \sqrt{1-p_{\min}}\right) \sum_{j=k_0}^{l(k_0)} \Xi_{j-1} + \hat c \sum_{j = k_0}^{l(k_0)} 
	\chi \big( \Rvalue_j - \psi(x^*) \big),
\end{align*}
where the first inequality follows from the fact that the term
$ \Xi_{l(k_0)} $ occurs $ m $ times within the double sum on the
right-hand side, whereas the other expressions $ \Xi_i $ are
nonnegative, and the second inequality is obtained by summing \eqref{eq:lemmaxi} in \Cref{lemma:lemmaxi} from $k_0$ to $l(k_0)$. 
In the final estimate, we simply omit some nonpositive terms.
This shows that the second statement holds for $k=l(k_0)$.

Suppose that the first statement holds for all $j$ from $k_0-1$ to some $k \geq l(k_0)$ and that the second statement is true for $k \geq l(k_0)$. We first show that the second statement for $k$ implies the first statement for $k+1$. Using \eqref{eq:statementb}, we get
\begin{align*}
   & \big( 1-\sqrt{1-\pmin} \big) 
   \sqrt{m}\sum_{j = l(k_0)}^k \Xi_j \\
   & \hspace*{8mm} \leq \Big( \frac{1}{2} + \sqrt{1-\pmin} \Big) \sum_{j = k_0}^k \Xi_{j-1} + \hat c \sum_{j = k_0}^{l(k_0)} \chi \big( \Rvalue_j - \psi(x^*) \big) \\
   & \hspace*{8mm} \leq \Big( \frac{1}{2} + \sqrt{1-\pmin} \Big) 
   \Big( \sum_{j = k_0}^{l(k_0)} \Xi_{j-1} + \sum_{j = l(k_0)+1}^{k} \Xi_{j-1}\Big) + \hat c \sum_{j = k_0}^{l(k_0)} \chi \big( \Rvalue_j - \psi(x^*) \big) \\
   & \hspace*{8mm} \leq \Big( \frac{1}{2} + \sqrt{1-\pmin} \Big) 
   \Big( \sum_{j = k_0}^{l(k_0)} \Xi_{j-1} + \sum_{j = l(k_0)}^{k} \Xi_{j}\Big) + \hat c \sum_{j = k_0}^{l(k_0)} \chi \big( \Rvalue_j - \psi(x^*) \big).
\end{align*}
This implies 
\begin{align*}
   & \left( \Big(1-\sqrt{1-\pmin} \Big)\sqrt{m} - \Big(\frac{1}{2} + \sqrt{1-\pmin} \Big) \right) \sum_{j = l(k_0)}^k \Xi_j \\
   & \hspace*{8mm} \leq \Big( \frac{1}{2} + \sqrt{1-\pmin} \Big) \sum_{j = k_0}^{l(k_0)} \Xi_{j-1} + \hat c \sum_{j = k_0}^{l(k_0)}
   \chi \big( \Rvalue_j - \psi(x^*) \big).
\end{align*}
Noting that, by definition of $m$, it holds that 
$ \big( 1-\sqrt{1-\pmin} \big) \sqrt{m} - 
\big( 1/2 + \sqrt{1- \pmin} \big) \geq 1/2$ and that we obviously
have $1/2 + \sqrt{1-\pmin} \leq 3/2$, we get
\begin{equation*}
   \sum_{j=l(k_0)}^k \Xi_j \leq 3 \sum_{j=k_0}^{l(k_0)} \Xi_{j-1} + 2 \hat c \sum_{j = k_0}^{l(k_0)} \chi \big( \Rvalue_j - \psi(x^*) \big).
\end{equation*}
This implies
\begin{equation*}
   \sum_{j=k_0-1}^k \Xi_j = \sum_{j=k_0}^{l(k_0)} \Xi_{j-1} + \sum_{j=l(k_0)}^k \Xi_j \leq 4 \sum_{j=k_0}^{l(k_0)} \Xi_{j-1} + 2 \hat c \sum_{j = k_0}^{l(k_0)}\chi \big( \Rvalue_j - \psi(x^*) \big),
\end{equation*}
hence, we obtain from \eqref{eq:normxi} that
\begin{equation}\label{eq:normxkp}
\begin{aligned}
   \|x^{k+1} - x^*\| & \leq \| x^{k_0-1} - x^* \| + \sum_{j = k_0-1}^k \| x^{j+1} - x^j\| \leq \| x^{k_0-1} - x^* \| + \frac{1}{\sqrt{a \pmin}} \sum_{j=k_0-1}^k \Xi_j\\
	&\leq \| x^{k_0-1} - x^*\| + \frac{4}{\sqrt{a\pmin}} \sum_{j=k_0}^{l(k_0)} \Xi_{j-1} + \frac{2 \hat c}{\sqrt{a \pmin}} \sum_{j = k_0}^{l(k_0)}\chi \big( \Rvalue_j - \psi(x^*) \big).
\end{aligned}
\end{equation}
The expression on the right-hand side is precisely the constant
$ \alpha $ from \Cref{Lem:alpha}. Thus, the first statement holds 
for $k+1$.

We next verify the second part for $k+1$. Since we know that $x^j \in B_\alpha(x^*)$ is true for all $j \in \{k_0-1, \dots, k+1\}$ by our induction hypothesis, we again apply \Cref{lemma:lemmaxi} and sum over \eqref{eq:lemmaxi}, now from $k_0$ to $k+1$. This yields
\begin{align*}
   \big( 1-\sqrt{1-\pmin} \big) \sqrt{m} 
   \sum_{j=l(k_0)}^{k+1} \Xi_j & 
   \leq \frac{1-\sqrt{1-\pmin}}{\sqrt{m}} \sum_{j=k_0}^{k+1} \sum_{i=j}^{l(j)} \Xi_i\\
	&\leq \big( 1/2 + \sqrt{1-\pmin} \big)
	\sum_{j=k_0}^{k+1} \Xi_{j-1} + \hat c \sum_{j =k_0}^{k+1} \Delta_{j, j+m}\\
	&\leq \big( 1/2 + \sqrt{1-\pmin} \big)
	\sum_{j=k_0}^{k+1} \Xi_{j-1} + \hat c \sum_{j =k_0}^{l(k_0)} \chi \big( \Rvalue_j - \psi(x^*) \big),
\end{align*}
where the first inequality results from the fact that each term
$ \Xi_j $ for $ j = l(k_0), \ldots, k+1 $ from the left-hand side
occurs $ m $ times within the double sum from the right-hand side
(observe that the relation $ l(j+1) = l(j)+1 $ holds for all $ j $),
whereas the remaining expressions $ \Xi_i $ are nonnegative,
the next inequality exploits \eqref{eq:lemmaxi} from 
\Cref{lemma:lemmaxi}, and the final estimate uses a telescoping 
sum argument where we omit some nonpositive summands. This completes our induction step.

Hence, it follows that $x^k \in B_\alpha(x^*)$ for all $k \geq k_0-1$.
Taking $k \to \infty$ in the resulting expression for $\sum_{j = k_0-1}^k
\| x^{j+1}- x^{j} \|$ from \eqref{eq:normxkp} then shows that 
$\{x^k\}_{k \in \mathbb{N}}$ is a Cauchy sequence and, therefore,
convergent. Thus, the accumulation point $x^*$ is the limit of the 
entire sequence $ \{ \xk \} $.
\end{proof}

\noindent
Next, we present a rate-of-convergence result for the case 
where the desingularization function is given by $\chi(t) = ct^{\kappa}$ for some $c > 0$ and $\kappa \in (0,1)$.
We recover the same rate-of-convergence as for the monotone proximal gradient method.
The proof is based on \cite{XiaoxiKanzowMehlitz2023} 
with suitable adaptations for the nonmonotone case. 

\begin{theorem}\label{thm:convergencerate}
Let \Cref{genas} hold, and suppose that $\{ \xk \}$ converges on some subsequence $\{ \xk\}_K$ to a limit point $x^*$ such that $\psi$ has the KL property at $x^*$. Then the entire sequence $\{ \xk \}$ converges to $x^*$. Further, if the corresponding desingularization function is given by $\chi(t) = ct^{\kappa}$ (for some $c>0$ and $\kappa \in (0,1)$), then the following statements hold:
\begin{enumerate}[label=(\alph*)]
	\item If $\kappa \in [1/2, 1)$, then $\{\Rvalue_k\}$ converges Q-linearly to $\psi(x^*)$ and $\{x^k\}$ converges R-linearly to $x^*$. \label{itm1}
	\item If $\kappa \in (0, 1/2)$, then there exist constants $\eta_1, \eta_2 > 0$ such that for all $k$ large enough it holds that\label{itm2}
	\begin{align}
		\Rvalue_k - \psi(x^*) &\leq \eta_1 k^{-\frac{1}{1-2\kappa}},\\
		\|\xk - x^*\| &\leq \eta_2 k^{-\frac{\kappa}{1-2\kappa}}.
	\end{align}
\end{enumerate}
\end{theorem}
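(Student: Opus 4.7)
The plan is to first note that convergence of the entire sequence $\{\xk\}$ to $x^*$ follows immediately from \Cref{thm:convergence}. The remaining work is to derive the rates. To this end I would work with the monotone auxiliary sequence $s_k := \Rvalue_k - \psi(x^*) \geq 0$, which by \Cref{Thm:GlobConv}(c) decreases monotonically to zero, and aim to establish a recursion of the form
\begin{equation*}
	s_{k+C}^{2(1-\kappa)} \leq \beta \, (s_{k-1} - s_k)
\end{equation*}
valid for all $k$ sufficiently large, where $C \in \N$ is a fixed shift (a multiple of $m$) and $\beta > 0$. Passing to the subsampled sequence $t_j := s_{k_0 + jC}$ and using monotonicity of $\{s_k\}$, this takes the form $t_{j+1}^\alpha \leq \beta(t_j - t_{j+1})$ with $\alpha := 2(1-\kappa)$, from which a variant of \Cref{Lem:Aragon} yields Q-linear convergence of $\{\Rvalue_k\}$ to $\psi(x^*)$ when $\alpha \leq 1$ (i.e., $\kappa \in [1/2, 1)$) and the polynomial rate $\Rvalue_k - \psi(x^*) = O(k^{-1/(1-2\kappa)})$ when $\alpha > 1$ (i.e., $\kappa \in (0, 1/2)$).

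To establish the recursion I would do a case analysis for each sufficiently large $k$ with $\xk \in B_\alpha(x^*)$. If $\psi(\xk) \leq \psi(x^*)$, the update rule for $\Rvalue_k$ immediately yields $s_k \leq (1-\pmin)s_{k-1}$, which is stronger than needed. If $\psi(\xk) > \psi(x^*)$, the KL property with $\chi(t) = ct^\kappa$ together with \Cref{distbound} gives
\begin{equation*}
	\big( \psi(\xk) - \psi(x^*) \big)^{1-\kappa} \leq c\kappa \Big( \tfrac{1}{\gammag} + L_\rho \Big) \| \xk - \xkm \|,
\end{equation*}
which after squaring and using $\| \xk - \xkm \|^2 \leq (a\pmin)^{-1}(s_{k-1} - s_k)$ from \eqref{eq:xdiff-conv} becomes
\begin{equation*}
	\big( \psi(\xk) - \psi(x^*) \big)^{2(1-\kappa)} \leq \beta \, (s_{k-1} - s_k).
\end{equation*}
For $k \in K_2$ we have $\psi(\xk) > \Rvalue_{k+m}$, hence $\psi(\xk) - \psi(x^*) > s_{k+m}$, and the target inequality $s_{k+m}^{2(1-\kappa)} \leq \beta(s_{k-1} - s_k)$ is immediate. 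For $k \in K_1$ with $\psi(\xk) > \psi(x^*)$, the inequality $\psi(\xk) \leq \Rvalue_{k+m}$ instead yields the structural bound $s_k - s_{k+m} \leq (1-\pmin)(s_{k-1} - s_{k+m})$, equivalently $s_{k-1} - s_k \geq \tfrac{\pmin}{1-\pmin}(s_k - s_{k+m})$. A window argument over $m$ (or $2m$) consecutive indices then combines the three situations: within each window either some index is in $K_2$ or in the Q-linear case (both delivering the target bound directly on some $s_{k+m}$), or the entire window consists of $K_1$ indices with $\psi(\cdot) > \psi(x^*)$, in which case the accumulated structural bounds together with the choice of $m$ made before \Cref{Lem:alpha} (tailored so that $(1-\sqrt{1-\pmin})\sqrt{m} \geq 1+\sqrt{1-\pmin}$) yield the desired recursion on $s_{k+C}$.

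For the rate on $\|\xk - x^*\|$, I would use $\|\xk - x^*\| \leq \sum_{j \geq k} \|x^{j+1} - x^j\|$ together with $\|x^{j+1} - x^j\| \leq \Xi_j/\sqrt{a\pmin}$ from \eqref{eq:normxi}. In case (a), Q-linear decay of $\{s_k\}$ implies that $\Xi_j = \sqrt{s_j - s_{j+1}} \leq \sqrt{s_j}$ is dominated by a geometric sequence, so the tail sum converges R-linearly to zero. In case (b), the naive estimate $\sum_j \sqrt{s_j}$ is too weak when $\kappa$ is small, so I would instead iterate \Cref{lemma:lemmaxi} in a telescoping manner to bound $\sum_{j \geq k} \Xi_j$ by a geometrically small remainder in the $\Xi$'s plus a contribution of order $\chi(s_{k_*}) = c s_{k_*}^\kappa$ for an index $k_*$ close to $k$. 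Substituting the rate $s_k = O(k^{-1/(1-2\kappa)})$ then gives $s_k^\kappa = O(k^{-\kappa/(1-2\kappa)})$, matching the claimed decay of $\|\xk - x^*\|$.

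The main obstacle is the window argument in the second paragraph: closing the scenario in which an entire window of $m$ consecutive indices lies in $K_1 \cap \{\psi(\xk) > \psi(x^*)\}$ requires carefully accumulating the structural decrease bounds through a telescoping or \Cref{lemma:lemmaxi}-style computation and showing that the combined estimate still implies a contraction on $s_{k+C}$ of the right order. The parameter $m$ was chosen precisely to make this combination work, but the bookkeeping is delicate because the KL-based bound and the structural bound are not of the same form and must be balanced. The rate on $\|\xk - x^*\|$ in case (b) is the other subtle point, since one cannot rely on the tail-sum argument and must extract the correct power $s_k^\kappa$ directly from \Cref{lemma:lemmaxi}.
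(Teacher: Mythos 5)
Your reduction of the theorem to \Cref{thm:convergence} plus an Aragón-type recursion for $s_k=\Rvalue_k-\psi(x^*)$ is the right frame, and your treatment of the rate for $\|\xk-x^*\|$ (reusing the telescoped Cauchy estimate from the convergence proof so as to extract a bound of order $\chi(s_k)\sim s_k^{\kappa}$ rather than the too-weak $\sum_j\sqrt{s_j}$) matches the paper. But the core step --- establishing $s^{2(1-\kappa)}\le\beta\,(s_k-s_{k+1})$ --- has a genuine gap exactly where you flag it. In the scenario of a whole window of indices $k\in K_1$ with $\psi(\xk)>\psi(x^*)$, the only information you extract is the structural inequality $s_{k-1}-s_k\ge\frac{\pmin}{1-\pmin}\,(s_k-s_{k+m})$, which relates \emph{increments} of the monotone sequence $\{s_k\}$ to each other. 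No accumulation of such inequalities over a window, however long, can produce a lower bound on an increment in terms of a \emph{value} $s_{k+C}^{2(1-\kappa)}$: the inequalities are homogeneous in the increments and are satisfied, for instance, by sequences whose increments decay geometrically while the values stay bounded away from zero. The integer $m$ chosen before \Cref{Lem:alpha} is tailored to close the Cauchy-sum bookkeeping in \Cref{thm:convergence}; it does not supply the missing value-versus-increment link here. So the all-$K_1$ branch cannot be closed as described.

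The missing idea is that the mean-type update itself provides that link, which makes the entire $K_1$/$K_2$ window machinery unnecessary for the rate. From $\Rvalue_{k+1}=(1-p_{k+1})\Rvalue_k+p_{k+1}\psi(\xkp)$ together with $\psi(\xkp)\le\Rvalue_k$ and $p_{k+1}\ge\pmin$ one obtains $s_{k+1}\le(1-\pmin)s_k+\pmin\bigl(\psi(\xkp)-\psi(x^*)\bigr)$, i.e.
\begin{equation*}
   s_k \;\le\; \frac{1}{\pmin}\,\bigl(s_k-s_{k+1}\bigr)\;+\;\bigl(\psi(\xkp)-\psi(x^*)\bigr).
\end{equation*}
Applying the KL inequality at $\xkp$ together with \Cref{distbound} and \eqref{Eq:Rupdate} gives $\bigl(\psi(\xkp)-\psi(x^*)\bigr)^{2(1-\kappa)}\le\sigma^{-1}\bigl(s_k-s_{k+1}\bigr)$ for a suitable constant $\sigma>0$ (trivially also when $\psi(\xkp)\le\psi(x^*)$), and substituting yields the clean one-step recursion $s_k^{\max\{1,\,2(1-\kappa)\}}\le\beta\,(s_k-s_{k+1})$ for all large $k$, to which \Cref{Lem:Aragon} applies directly --- no shift $C$, no subsampled sequence, and no case analysis. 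Replacing your second paragraph by this argument lets the rest of your outline go through.
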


\begin{proof}
Taking \Cref{thm:convergence} into account, we only need to verify the statements \ref{itm1} and \ref{itm2}. As a first step, let us prove the results for the $\{\Rvalue_k\}$. We first claim that for $\kappa \in (0,1)$ and with
\begin{equation*}
	\sigma := \frac{1-\alpha_{\max}}{2\gamma_{\max}} \frac{\pmin}{c^2\kappa^2 \Big(\frac{1}{\gammag} + L_\rho \Big)^2},
\end{equation*}
it holds that
\begin{equation}\label{eq:sigma-inequality}
	\psi(\xkp) - \psi(x^*) \leq \Big(\frac{1}{\sigma}\Big)^{\frac{1}{2(1-\kappa)}} \big( \Rvalue_k - \Rvalue_{k+1} \big)^{\frac{1}{2(1-\kappa)}}
\end{equation}
for all $k \in \mathbb{N}$ sufficiently large. In fact, if $\psi(\xkp) \leq \psi(x^*)$ holds, then the left-hand side of \eqref{eq:sigma-inequality}
is nonpositive, hence, the claim holds trivially,
Thus, it remains to consider the case $\psi(\xkp) > \psi(x^*)$. 
In view of our 
previous results, we may assume that $k$ is large enough such 
that $\xk \in B_\alpha (x^*)$ and $\psi(x^*) < \psi(x^{k+1}) < \psi(x^*) + \eta$ hold.

As $\psi$ satisfies the KL property at $x^*$ with $\chi(t) = ct^{\kappa}$, we have
\begin{align*}
	1 & \leq \chi' \big( \psi(\xkp) - \psi(x^*) \big)
	\dist \big( 0, \partial \psi(\xkp) \big)\\
	& = c\kappa \big( \psi(\xkp) - \psi(x^*) \big)^{\kappa-1}
    \dist \big( 0, \partial \psi(\xkp) \big).
\end{align*}
By \Cref{distbound}, this yields
\begin{equation*}
	1 \leq c\kappa \Big(\frac{1}{\gammag} + L_\rho\Big) 
	\big( \psi(\xkp) - \psi(x^*) \big)^{\kappa-1} \| \xkp - x^k \|,
\end{equation*}
which gives the inequality
\begin{equation}\label{eq:proofconvrate1}
	\|\xkp - x^k \| \geq \frac{1}{c \kappa \Big(
	\frac{1}{\gammag} + L_\rho \Big)} 
	\big( \psi(\xkp) - \psi(x^*) \big)^{1-\kappa}.
\end{equation}
By \eqref{Eq:Rupdate}, we also have 
\begin{equation}\label{eq:proofconvrate2}
	\Rvalue_{k+1} - \Rvalue_k \leq - \frac{1-\alpha_{\max}}{2\gamma_{\max}}\pmin \|\xkp - x^k \|^2 .
\end{equation}
Combination of \eqref{eq:proofconvrate1} and 
\eqref{eq:proofconvrate2} yields
\begin{align*}
	\Rvalue_{k+1} - \Rvalue_k &\leq - \frac{1-\alpha_{\max}}{2\gamma_{\max}} \pmin \|\xkp - x^k \|^2\\
	 & \leq - \frac{1-\alpha_{\max}}{2\gamma_{\max}} \pmin \frac{1}{c^2 \kappa^2 \Big(\frac{1}{\gammag} + L_\rho \Big)^2} 
	\big( \psi(\xkp) - \psi(x^*) \big)^{2(1-\kappa)}\\
	& = -\sigma \big( \psi(\xkp) - \psi(x^*) \big)^{2(1-\kappa)} .
\end{align*}
Rearranging these terms shows that the claim \eqref{eq:sigma-inequality}
holds.

Next recall that, by the acceptance criterion for the stepsize
$ \gamma_k $, we always have $ \psi (\xkp) \leq \Rvalue_k $.
Hence, it follows that
\begin{equation}\label{eq:Rvalue-inequality}
	\Rvalue_{k+1} = (1-p_{k+1}) \Rvalue_k + p_{k+1} \psi(\xkp) \leq (1-\pmin) \Rvalue_k + \pmin \psi(\xkp) .
\end{equation}
Denote by $\{s_k\}$ the sequence defined by $s_k := \Rvalue_k - \psi(x^*)
\geq 0 $. Then $ s_k \to 0 $ monotonically, and we obtain
\begin{align*}
	s_{k+1} &\leq (1-\pmin) s_k + \pmin(\psi(\xkp) - \psi(x^*))\\
			&\leq (1-\pmin)s_k + \pmin \Big(\frac{1}{\sigma}\Big)^{\frac{1}{2(1-\kappa)}} \big( s_k - s_{k+1} \big)^{\frac{1}{2(1-\kappa)}} ,
\end{align*}
where the first inequality follows from \eqref{eq:Rvalue-inequality}
and the second one results from \eqref{eq:sigma-inequality}.
This implies
\begin{align*}
s_k &\leq \frac{1}{\pmin} (s_k -s_{k+1}) + \Big(\frac{1}{\sigma}\Big)^{\frac{1}{2(1-\kappa)}} \big( s_k - s_{k+1} \big)^{\frac{1}{2(1-\kappa)}}\\
	&\leq \left(\frac{1}{\pmin} + \left(\frac{1}{\sigma}\right)^{\frac{1}{2(1-\kappa)}}\right) (s_k - s_{k+1})^{\min\{1, \frac{1}{2(1-\kappa)}\}}
\end{align*}
for all $ k $ sufficiently large.
As for all $a, b > 0$ it holds that $1/\min\{a, b\} = \max\{1/a, 1/b\}$, it follows that
\begin{equation*}
s_k^{\max\{1, 2(1-\kappa)\}} \leq \beta (s_k - s_{k+1}),
\end{equation*}
where 
\begin{equation*}
\beta := \left(\frac{1}{\pmin} + \left(\frac{1}{\sigma}\right)^{\frac{1}{2(1-\kappa)}}\right)^{\max\{1, 2(1-\kappa)\}} > 0
\end{equation*}
is a constant.

We are now in the setting of \Cref{Lem:Aragon} and immediately obtain the 
corresponding rate-of-convergence results for the sequence $\{\Rvalue_k\}$ as $\kappa \in (0, 1/2)$ implies that $2(1-\kappa) > 1$ and $\kappa \in [1/2, 1)$ implies that $2(1-\kappa) \in (0, 1]$.

Let us now verify the statements for the sequence $\{\xk\}$. In view of \Cref{thm:convergence}, the equations in the proof of that result remain valid if $k_0-1$ is replaced by some $k$ sufficiently large. Note that $\Xi_{j-1} \leq \sqrt{\Rvalue_{j-1} - \psi(x^*)} = \sqrt{s_{j-1}}$. Taking the monotonicity of the function $\chi$ and the sequences $\{\Rvalue_k\}$ and thus $\{s_k\}$ into account, it follows from \eqref{eq:normxkp} that, for $l>k$:
\begin{align*}
	\|\xk - x^l\| &\leq \sum_{j=k}^{l-1} \|x^{j+1}-x^j\| \\
	&\leq \frac{4}{\sqrt{a\pmin}} \sum_{j=k+1}^{l(k+1)} \Xi_{j-1} + \frac{2 \hat c}{\sqrt{a \pmin}} \sum_{j = k+1}^{l(k+1)}\chi \big( \Rvalue_j - \psi(x^*) \big)\\
	&\leq \frac{4}{\sqrt{a\pmin}} \sum_{j=k+1}^{l(k+1)} \sqrt{s_{j-1}} + \frac{2 \hat c}{\sqrt{a \pmin}} \sum_{j = k+1}^{l(k+1)}\chi ( s_j )\\
	&\leq \frac{4m}{\sqrt{a\pmin}} \sqrt{s_k} + \frac{2 \hat c m}{\sqrt{a \pmin}}\chi ( s_k )\\
	&\leq \tilde \eta s_k^{\min\{1/2, \kappa\}}
\end{align*}
for all $ k $ sufficiently large, where 
\begin{equation*}
\tilde \eta := \frac{4 + 2 \hat c c}{\sqrt{a\pmin}}m.
\end{equation*}
Taking now $l \to \infty$, together with the corresponding rate-of-convergence results for $\{s_k\}$ from the first part, this completes the proof.
\end{proof}


\noindent
We finally consider a generalized projected gradient method as a simple
application of our theory. 

\begin{example}\label{Ex:GenProjGrad}
\emph{(Generalized Nonmonotone Projected Gradient Method) \\
Consider the constrained optimization problem 
\begin{equation*}
	\min \ f(x) \quad \text{subject to} \quad x \in S
\end{equation*}
for some given set $ S \subseteq \X $ which is assumed to be nonempty
and closed (not necessarily convex). This problem can be reformulated
as the unconstrained composite optimization problem
\begin{equation*}
   \min \ f(x) + \phi (x), \qquad x \in \X,
\end{equation*}
where $ \phi (x) := \delta_S (x) $ denotes the indicator function
of $ S $, i.e., $ \delta_S (x) = 0 $ for $ x \in S $ and 
$ \delta_S (x) = + \infty $ otherwise. Since $ S $ is nonempty
and closed, this indicator function is proper and lower semicontinuous.
Moreover, the proximal subproblem \eqref{Eq:ProxIteration} reduces
to compute a projection of a given point onto the set $ S $. Recall
that this projection always exists, albeit it is not necessarily unique
since $ S $ is not assumed to be convex. Consequently, if $ f $
satisfies \Cref{genas} (c), and in particular if $ f $ further has the respective KL property,
our convergence results 
apply to this generalized projected gradient method. Note that
this may be viewed as a generalization of the famous spectral
gradient method from \cite{BirginMartinezRaydan2000} where the feasible 
set $ S $ is assumed to be convex (and the globalization uses the max-type
nonmonotone stepsize rule, see also the extension in
\cite{JiaKanzowMehlitzWachsmuth2021}).}
\end{example}



\section{Final Remarks}\label{Sec:Final}

The current paper presents a global and rate-of-convergence result
for nonmonotone proximal gradient methods applied to composite
optimization problems under fairly mild assumptions. To the best
of our knowledge, this is the first time that results of this kind
are shown for a nonmonotone method without a global Lipschitz assumption
or the a priori knowledge that the iterates generated by
the given method are bounded. Though the technique of proof is
quite technical and it is currently not clear whether these results
can be extended to other classes of first-order methods, we
plan to have a closer look at this topic as part of our future
research.\\[1mm]

\noindent 
{\bf Acknowledgement.} The authors thank Xiaoxi Jia for her comments 
on a previous version of the manuscript which helped to improve
\Cref{thm:convergencerate}.

\printbibliography

\end{document}